\def\RR{\mathbb{R}}
\def\div{\operatorname{div}}
\def\curl{\operatorname{curl}}
\def\dom{\operatorname{dom}}
\def\epi{\operatorname{epi}}
\def\prox{\operatorname{prox}}
\def\argmin{\operatorname{arg\,min}}
\def\bb{\mathbf{B}}
\def\hh{\mathbf{H}}
\def\jj{\mathbf{J}}
\def\js{\mathbf{j_s}}
\def\hs{\mathbf{H_s}}
\def\ncp{\varphi_{\text{NCP}}}
\def\aa{\mathbf{A}}
\def\uu{\mathbf{u}}
\def\vv{\mathbf{v}}
\def\nn{\mathbf{n}}
\def\Th{\mathcal{T}_h}
\newtheorem{lemma}{Lemma}
\newtheorem{theorem}[lemma]{Theorem}
\theoremstyle{definition}
\newtheorem{assumption}[lemma]{Assumption}
\newtheorem{problem}[lemma]{Problem}
\newtheorem{remark}[lemma]{Remark}
\begin{document}
\title[A semi-smooth Newton method for magnetic hysteresis]{A semi-smooth Newton method for\\ magnetic field problems with hysteresis}
\author{H. Egger$^{1,2}$ \and  F. Engertsberger$^1$}
\address{%
\small 
$^1$Institute of Numerical Mathematics, Johannes Kepler University Linz, Austria \\
$^2$Johann Radon Institute for Computational and Applied Mathematics, Linz, Austria}

\begin{abstract}
Ferromagnetic materials exhibit anisotropy, saturation, and hysteresis. We here study the incorporation of an incremental vector hysteresis model representing such complex behavior into nonlinear magnetic field problems both, from a theoretical and a numerical point of view.  We show that the hysteresis operators, relating magnetic fields and fluxes at every material point, are strongly monotone and Lipschitz continuous. This allows to ensure well-posedness of the corresponding magnetic field problems and appropriate finite element discretizations thereof. We further show that the hysteresis operators are semi-smooth, derive a candidate for their generalized Jacobians, and establish global linear and local superlinear convergence of a the semi-smooth Newton method with line search applied to the iterative solution of the discretized nonlinear field problems. The results are proven in detail for a hysteresis model involving a single pinning force and the scalar potential formulation of magnetostatics. The extension to multiple pinning forces and the vector potential formulation is possible and briefly outlined. The theoretical results are further illustrated by numerical tests.
\end{abstract}

\maketitle 

\begin{quote}
\footnotesize 
\textbf{Keywords:}
magnetic vector hysteresis, 
finite element discretization, 
semi-smooth Newton methods, 
global mesh independent convergence, superlinear convergence
\end{quote}

\begin{quote}
\footnotesize 
\textbf{MSC Classification:}
65K10, 
65N30, 
49M15,  
65K15 
46N10 
\end{quote}

\section{Introduction}
\label{sec:intro}
The simulation, optimization, and design of electric machines and power transformers requires the repeated numerical solution of magnetic field problems including nonlinear constitutive relations that accurately describe anisotropy, saturation, and hysteresis. 
The magnetostatic approximation of Maxwell's equations is suitable in the low frequency setting of such applications~\cite{Meunier2008,Salon2012}. 
As the basic model problem for our further consideration, we therefore consider the system
\begin{alignat}{3}
\curl \hh &= \mathbf{j_s} \quad \text{in } \Omega, \qquad & \div \bb &= 0 \quad &&\text{in } \Omega, \label{eq:1}\\
\bb \cdot \mathbf{n} &= 0 \quad \text{on } \partial\Omega, \qquad & \bb &= \mu_0 \hh + \jj \quad &&\text{in } \Omega. \label{eq:2}
\end{alignat}
Here $\hh$ denotes the magnetic field intensity, $\bb$ the magnetic induction, $\mu_0$ the permeability of vacuum, and $\jj$ the magnetic polarization density. From the first equation it is clear, that the current density $\js = \curl \hs$ can be represented by an appropriate source field $\hs$. The system is closed by the vector hysteresis model~\cite{Lavet2013}
\begin{align} \label{eq:3}
    \jj = \argmin\nolimits_{\jj}  \{ U(\jj) - \langle \hh ,\jj \rangle + \chi |\jj - \jj_{p}| \},
\end{align}
which describes the local change of the polarization $\jj$ from its previous state $\jj_p$ due to incremental loading by the magnetic field $\hh$. Here $U(\jj)$ is an internal energy density and $\chi \ge 0$ a parameter related to the strength of the pinning forces. 
The value $\jj_p$ serves as an internal variable encoding the memory of the system. Repeated solution of the system \eqref{eq:1}--\eqref{eq:3}, together with the update $\jj \to \jj_p$ after every load step, allows the simulation of load cycles of relevance in applications.

\subsection{Energy-based hysteresis model}
The model \eqref{eq:3} was proposed by Henrotte et al. in \cite{Lavet2013,Henrotte2006}, based on previous work of Bergqvist~\cite{Bergqvist1997,Bergqvist1997a}, and further analysed in \cite{Kaltenbacher2022,Prigozhin2016}. 
In contrast to some other popular models for hysteresis, like Preisach operators \cite{Mayergoyz1988} or the Jiles-Atherton model \cite{Jiles1986,Sadowski2002}, the above model 
is thermodynamically consistent and vectorial by construction. 
Let us briefly comment on the mathematical background. 
The first order optimality condition for \eqref{eq:3} reads 
\begin{align}
\label{eq:inclusion}
\nabla_{\jj} U(\jj) - \hh \in 
\begin{cases}
-\chi \frac{\jj - \jj_{p}}{|\jj - \jj_{p}|} & \text{if } \jj \neq \jj_{p}, \\
\{x \in \RR^d : |x| \leq \chi\} &  \text{if } \jj = \jj_{p}.
\end{cases}
\end{align}
We note that the right hand side of this relation, which amounts to the sub-gradient of the function $\chi |\jj-\jj_p|$, depends on the direction but not on the size of $\jj-\jj_p$. 
Hence \eqref{eq:inclusion} can be understood as an implicit Euler discretization of the inclusion
\begin{align}
\hh - \nabla_{\jj} U(\jj) \in \partial_{\dot \jj} \mathcal{R}(\dot \jj) 
\end{align}
with dissipation potential $\mathcal{R}(\dot \jj) = \chi |\dot \jj|$. Together with \eqref{eq:1}--\eqref{eq:2}, this model falls into the class of rate-independent systems studied in \cite{Mielke2015}.
The problem \eqref{eq:1}--\eqref{eq:3} hence amounts to a single load step of of such a system after discretization in time.

\subsection{Magnetic field problem with hysteresis}
The local material model \eqref{eq:3} defines $\jj=\jj(\hh;\jj_p)$ as a function of the magnetic field $\hh$ and the polarization $\jj_p$. The second equation in \eqref{eq:2} with \eqref{eq:3} thus implicitly defines a constitutive law 
\begin{align} \label{eq:scal0}
\bb(\hh;\jj_p) = \mu_0 \hh + \jj(\hh;\jj_p),
\end{align}
called \emph{forward hysteresis operator}, which describes the relation between magnetic fields and fluxes. 
On topologically simple domains, the magnetic field can be expressed as $\hh=\hh_s - \nabla \psi$ with $\psi$ denoting a magnetic scalar potential~\cite{Meunier2008,Monk2003}. Then the first equation in \eqref{eq:1} is satisfied automatically and the system \eqref{eq:1}--\eqref{eq:3} can be restated equivalently as a second order elliptic boundary value problem
\begin{align}
\div (\bb(\hh_s - \nabla \psi;\jj_p)) &= 0 \qquad \text{in } \Omega, \label{eq:scal1}\\
\bb(\hh_s - \nabla \psi;\jj_p) \cdot \nn &= 0 \qquad \text{on } \partial\Omega, \label{eq:scal2}
\end{align}
which characterizes the magnetic scalar potential $\psi$ for a single time step. The simulation of a full load cycle then requires to solve this system for varying excitations $\js = \curl \hh_s$ and incrementally updating $\jj \to \jj_p$ after each load step.

\subsection{Related work and main contributions}
The efficient evaluation of the hysteresis operator $\bb(\hh;\jj_p)$ has been investigated intensively in the literature and its application for simulations based on the scalar potential formulation \eqref{eq:scal1}--\eqref{eq:scal2} has been demonstrated successfully~\cite{Prigozhin2016,Jacques2018}. 
Various methods have been proposed for the iterative solution of the nonlinear systems arising from discretization of the nonlinear magnetic field problems; see \cite{Lavet2013,Egger2024quasi,Jacques2015} for some results.  
In this work, we make the following contributions to this active field of research:
\begin{itemize}
\item We show that the mapping $\bb(\hh;\jj_p) = \nabla_\hh w^*(\hh;\jj_p)$ is well-defined, Lipschitz continuous, and strongly monotone, and can be expressed as the derivative of a convex co-energy density $w^*(\hh;\jj_p)$; see Section~\ref{sec:material}. 
\item As a direct consequence, we obtain existence of a unique solution to \eqref{eq:scal1}--\eqref{eq:scal2} and appropriate finite element discretizations thereof; see Section~\ref{sec:scalar}.
\end{itemize}
Our analysis generalizes previous results \cite{Engertsberger23,Egger2024global} for smooth material laws to the case of hysteresis, for which $\bb(\hh;\jj_p)$ is in general not differentiable; see below.
\begin{itemize}
\item We further show that the mapping $\bb(\hh;\jj_p)$ is semi-smooth~\cite{Qi1993,Ulbrich2011}, i.e., it can be well-approximated by certain linearizations, and we provide an explicit formula for a representative in the generalized Jacobian; see Section~\ref{sec:semismooth}.
\item We formulate a semi-smooth Newton method for the iterative solution of the discretized nonlinear problem, prove its global linear and mesh-independent convergence, as well as local superlinear convergence; see Section~\ref{sec:newton}.
\end{itemize}
These latter results are strongly motivated by corresponding analyses for elasto-plasticity~\cite{Gruber2009,Hager2009,Wieners2011} which share many common features with the magnetic hysteresis models considered here; see \cite{Mielke2015} for a discussion of the common basis.
The results are first derived for the hysteresis model \eqref{eq:3} with one pinning force. The extension to multiple pinning forces is possible and will be briefly discussed in Section~\ref{sec:extension}. 
Numerical results are presented in Section~\ref{sec:numerics} to illustrate the efficiency of the semi-smooth Newton method and to give a comparison to other iterative solvers proposed in previous work. The theoretical results provided in this paper, in fact, also provide a solid theoretical backup for some of these methods. 
\section{Local material law}
\label{sec:material}

In this section, we prove some elementary facts about the constitutive relations of the energy-based hysteresis model which are at the core for the further investigations.

\subsection{Preliminaries}
Let us recall some basic notions of convex analysis~\cite{Bauschke2011,Beck2017}: A function $f : \RR^d \to (-\infty,\infty]$ to the extended reals is called \emph{closed}, if its epigraph $\epi(f)=\{(x,y) : y \ge f(x)\}$ is closed. It is called \emph{proper}, if $f$ is not unbounded everywhere, and $\dom(f) = \{x : f(x) < \infty\}$ denotes the domain of $f$. Furthermore, $f$ is called \emph{$\sigma$-strongly convex}, if the function $g(x) = f(x) - \frac{\sigma}{2} |x|^2$ is still convex. 
Now let $f$ be convex, closed, and proper. Then for any $\mu>0$ and $x \in \RR^d$, the minimizer
\begin{align}
\prox_{\mu f}(x) = \arg\min_y \{f(y) + \tfrac{1}{2\mu} |y-x|^2\}
\end{align}
is well-defined and $x \mapsto \prox_{\mu f}(x)$ is called \emph{proximal operator}. 
The minimal value 
\begin{align} 
M_f^\mu(x) = \min_y \{f(y) + \tfrac{1}{2\mu} |y-x|^2\}
\end{align}
is called the \emph{Moreau envelope}. The following properties are well-known. 
\begin{lemma} \label{lem:prelim}
Let $f : \RR^d \to (-\infty,\infty]$ be closed, convex, and proper, and $\mu>0$. 
Then 

\noindent 
(i) The mapping $x \mapsto \prox_{\mu f}(x)$  is well-defined on $\RR^d$, monotone, and non-expansive, i.e., $\langle \prox_{\mu f}(x) - \prox_{\mu f}(y), x-y\rangle \ge 0$ and $|\prox_{\mu f}(x) - \prox_{\mu f}(y)| \le |x-y|$. 

\noindent 
(ii) The mapping $x \mapsto M_f^\mu(x)$ is convex and continuously differentiable on $\RR^d$ with Lipschitz continuous derivative $\nabla M_f^\mu(x) = \frac{1}{\mu} (x - \prox_{\mu f}(x))$. 
\end{lemma}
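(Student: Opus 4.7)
The plan is to deduce both parts from the $\frac{1}{\mu}$-strong convexity of the perturbed objective $y \mapsto f(y) + \frac{1}{2\mu}|y-x|^2$; the lemma collects classical Moreau--Yosida regularization facts (cf.\ \cite{Bauschke2011}), and the arguments are short enough that I would give them directly rather than cite.

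For part (i), I would first observe that for every $x \in \RR^d$ the perturbed objective is closed, proper, $\frac{1}{\mu}$-strongly convex, and therefore coercive in $y$, which gives existence and uniqueness of $p_x := \prox_{\mu f}(x)$. The first-order optimality condition reads $\frac{1}{\mu}(x - p_x) \in \partial f(p_x)$. Writing the analogous inclusion at a second point $y$ with minimizer $p_y$, subtracting, pairing with $p_x - p_y$, and invoking monotonicity of $\partial f$ produces the \emph{firm non-expansiveness} estimate
\begin{equation*}
|p_x - p_y|^2 \le \langle x - y,\, p_x - p_y\rangle,
\end{equation*}
from which monotonicity of $\prox_{\mu f}$ is immediate and non-expansiveness follows by Cauchy--Schwarz.

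For part (ii), convexity of $M_f^\mu$ is inherited from its representation as the infimal convolution of $f$ and $\frac{1}{2\mu}|\cdot|^2$. To obtain the gradient formula I would sandwich the difference $M_f^\mu(x+h) - M_f^\mu(x)$ by inserting the suboptimal test point $p_x$ on the upper side and $p_{x+h}$ on the lower side, and control the quadratic cross-terms by means of the non-expansiveness from (i); this yields $\nabla M_f^\mu(x) = \frac{1}{\mu}(x - p_x)$. The Lipschitz bound with constant $\frac{1}{\mu}$ then follows because the firm non-expansiveness of $\prox_{\mu f}$ transfers to $I - \prox_{\mu f}$ by a one-line expansion of $|(I-\prox_{\mu f})x - (I-\prox_{\mu f})y|^2$, after which multiplication by $\frac{1}{\mu}$ closes the argument. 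The only non-bookkeeping step is the differentiability part; an attractive shortcut would be Fenchel duality, $M_f^\mu = \bigl(f^* + \tfrac{\mu}{2}|\cdot|^2\bigr)^*$, since the conjugate of a $\mu$-strongly convex function is automatically $C^1$ with $\frac{1}{\mu}$-Lipschitz gradient, delivering both properties simultaneously.
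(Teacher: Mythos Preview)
Your argument is correct and standard; the firm non-expansiveness inequality is exactly the right intermediate step, and both the sandwiching proof of differentiability and the Fenchel-duality shortcut via $M_f^\mu = (f^* + \tfrac{\mu}{2}|\cdot|^2)^*$ are valid routes to the gradient formula and its Lipschitz bound.

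The paper, however, does not prove this lemma at all: immediately after the statement it simply writes ``We refer to \cite[Ch.~6.6]{Beck2017} for a proof of these assertions and further related results.'' So your proposal is not a different approach to the paper's proof but rather a self-contained substitute for a citation. What you gain is that the reader sees the mechanism (in particular the role of firm non-expansiveness, which the paper's later arguments implicitly rely on through the monotonicity and Lipschitz constants in Theorem~\ref{thm:w}); what the paper gains by citing is brevity, since Lemma~\ref{lem:prelim} is purely auxiliary and the results are textbook. Either choice is defensible.
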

%
We refer to \cite[Ch.~6.6]{Beck2017} for a proof of these assertions and further related results. 

\subsection{Local material model}
For the analysis of the hysteresis model \eqref{eq:3}, we make the following assumptions, which actually cover quite general classes of materials.
\begin{assumption}
\label{ass:1}
$\jj_p \in \RR^d$, $d=2,3$, $\chi \ge 0$, and $U : \RR^d \to (-\infty,\infty]$ is proper, closed, $\sigma$-strongly convex, and two-times continuously differentiable on its domain. 
\end{assumption}
A typical example for $U$ satisfying these assumptions is $U(\jj) =-\frac{2 A J_s}{\pi} \log(\cos(\frac{\pi}{2}\frac{\jj}{J_s}))$ with positive constants $A$, $J_s$; see e.g.~\cite{Prigozhin2016}.
The following result establishes the properties of the hysteresis operator $\bb(\hh;\jj_p)$ mentioned in the introduction. For ease of notation, we do not mention the dependence on $\jj_p$ in the following, unless needed.
\begin{theorem} \label{thm:w}
Let Assumption~\ref{ass:1} be valid. Then the co-energy density
\begin{align} \label{eq:ws}
w^*(\hh) = \frac{\mu_0}{2} |\hh|^2 - \inf\nolimits_{\jj} \{ U(\jj)  - \langle \hh,\jj\rangle + \chi |\jj-\jj_p| \}
\end{align}
is well-defined and continuously differentiable on $\RR^d$ with
$\nabla_\hh w^*(\hh) = \mu_0 \hh + \jj(\hh)$ where $\jj(\hh)$ denotes the minimizer of \eqref{eq:3}. 
Moreover
\begin{align}
\langle \nabla_\hh w^*(\hh) - \nabla_\hh w^*(\tilde \hh), \hh - \tilde \hh\rangle &\ge \mu_0 |\hh - \tilde \hh|^2, \\
|\nabla_\hh w^*(\hh) - \nabla_\hh w^*(\tilde \hh)| &\le \tfrac{1+\sigma \mu_0}{\sigma} |\hh-\tilde \hh|
\end{align}
i.e., the gradient $\nabla_\hh w^*(\hh)$ is strongly monotone and Lipschitz continuous.
\end{theorem}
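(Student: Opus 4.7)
The plan is to exhibit $w^*$ as a quadratic plus a Moreau envelope and then apply Lemma~\ref{lem:prelim} directly. First I would introduce the shifted potential $\tilde U := U - \tfrac{\sigma}{2}|\cdot|^2$, which is convex, closed, and proper by the $\sigma$-strong convexity assumption, and set $f(\jj) := \tilde U(\jj) + \chi |\jj - \jj_p|$, still convex, closed, and proper. Completing the square inside the infimum in \eqref{eq:ws} yields
\[
U(\jj) - \langle \hh,\jj\rangle + \chi |\jj - \jj_p| = f(\jj) + \tfrac{\sigma}{2} \bigl|\jj - \hh/\sigma\bigr|^2 - \tfrac{1}{2\sigma}|\hh|^2,
\]
which rewrites the co-energy density as
\[
w^*(\hh) = \bigl(\tfrac{\mu_0}{2} + \tfrac{1}{2\sigma}\bigr) |\hh|^2 - M_f^{1/\sigma}(\hh/\sigma).
\]

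Next I would invoke Lemma~\ref{lem:prelim}(ii): $M_f^{1/\sigma}$ is convex and continuously differentiable on $\RR^d$, so $w^*$ inherits $C^1$-regularity on $\RR^d$. Combining the chain rule with the identity $\nabla M_f^{1/\sigma}(x) = \sigma \bigl(x - \prox_{f/\sigma}(x)\bigr)$ and simplifying gives
\[
\nabla_\hh w^*(\hh) = \mu_0 \hh + \prox_{f/\sigma}(\hh/\sigma).
\]
The proximal point is, by definition, the unique minimizer of $f(\jj) + \tfrac{\sigma}{2}|\jj - \hh/\sigma|^2$; undoing the completion of the square shows that this coincides with the minimizer $\jj(\hh)$ of \eqref{eq:3}, so $\nabla_\hh w^*(\hh) = \mu_0\hh + \jj(\hh)$ as claimed.

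Finally, the two estimates would follow directly from Lemma~\ref{lem:prelim}(i) applied to $\prox_{f/\sigma}$. Monotonicity of the proximal operator, together with the rescaling $\hh \mapsto \hh/\sigma$, yields $\langle \jj(\hh) - \jj(\tilde\hh), \hh - \tilde\hh \rangle \ge 0$, while non-expansiveness and the same rescaling yield $|\jj(\hh) - \jj(\tilde\hh)| \le \tfrac{1}{\sigma} |\hh - \tilde\hh|$. Adding the trivially strongly monotone and $\mu_0$-Lipschitz contribution of $\mu_0 \hh$ produces exactly the constants $\mu_0$ for strong monotonicity and $\mu_0 + 1/\sigma = (1 + \sigma\mu_0)/\sigma$ for the Lipschitz bound.

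The only step that requires genuine care is the algebraic bookkeeping in the completion of squares and the subsequent identification of the rescaled proximal map with $\jj(\hh)$; once this reduction is in place, the whole theorem becomes an essentially cosmetic corollary of Lemma~\ref{lem:prelim}. It is worth emphasizing that the $\sigma$-strong convexity of $U$ in Assumption~\ref{ass:1} — rather than mere convexity — is precisely what makes the shift $\tilde U = U - \tfrac{\sigma}{2}|\cdot|^2$ admissible as input to the Moreau envelope machinery, and thus is what underlies both the smoothness and the Lipschitz constant $1/\sigma$ of $\hh \mapsto \jj(\hh)$.
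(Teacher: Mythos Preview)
Your proposal is correct and follows essentially the same route as the paper: the paper defines $g(\jj) = U(\jj) - \tfrac{\sigma}{2}|\jj|^2 + \chi|\jj-\jj_p|$ (your $f$), performs the identical completion of the square to rewrite $w^*(\hh) = \tfrac{\mu_0}{2}|\hh|^2 + \tfrac{1}{2\sigma}|\hh|^2 - M_g^{1/\sigma}(\hh/\sigma)$, and then applies Lemma~\ref{lem:prelim} exactly as you do. The only cosmetic difference is that the paper derives the monotonicity and Lipschitz estimates before identifying $\prox_{g/\sigma}(\hh/\sigma)$ with $\jj(\hh)$, whereas you do it afterwards.
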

\begin{proof}
Let us define $g(\jj) = U(\jj) - \frac{\sigma}{2} |\jj|^2 + \chi |\jj - \jj_p|$. 
Then by elementary manipulations, the co-energy density \eqref{eq:ws} can be rewritten equivalently as
\begin{align*}
w^*(\hh) = \tfrac{\mu_0}{2} |\hh|^2 + \tfrac{1}{2\sigma} |\hh|^2 - \inf\nolimits_{\jj} \{g(\jj) + \tfrac{\sigma}{2} |\hh/\sigma - \jj|^2\}.
\end{align*}
The second term amounts to the Moreau envelope $M_g^{1/\sigma}(\hh/\sigma)$, and by Lemma~\ref{lem:prelim} point (ii) and the chain rule, we see that 
\begin{align*}
\nabla_\hh M_g^{1/\sigma}(\hh/\sigma) 
= \frac{1}{\sigma} \left[ \sigma \left( \hh/\sigma - \prox_{g/\sigma}(\hh/\sigma)\right)\right].
\end{align*}
By combination with the previous formula, we thus obtain 
\begin{align*}
\nabla_\hh w^*(\hh) = \mu_0 \hh + \prox_{g/\sigma}(\hh/\sigma). 
\end{align*}
From Lemma~\ref{lem:prelim} point (i), we conclude that $\nabla_\hh w^*(\hh)$ is strongly convex and Lipschitz continuous with parameters $\mu_0$ and $\mu_0+\frac{1}{\sigma} = \frac{\sigma \mu_0 + 1}{\sigma}$, which already yields the estimates of the theorem. 
From definition of the prox-operator, we further see that 
\begin{align*}
\prox_{g/\sigma}(\hh/\sigma)
&= \arg\min_\jj \{g(\jj) + \tfrac{\sigma}{2} |\hh/\sigma - \jj|^2\} \\
&= \arg\min_\jj \{U(\jj) - \langle \hh,\jj\rangle + \chi |\jj-\jj_p| \} = \jj(\hh),
\end{align*}
which again follows by elementary computations. This shows the formula for the gradient $\nabla_\hh w^*(\hh)$ and already concludes the proof of the assertions.
\end{proof}

\begin{remark}
The previous theorem shows that the energy-based vector hysteresis operator of \cite{Lavet2013} can be expressed as $\bb(\hh) = \nabla_\hh w^*(\hh)$ and it defines a strongly monotone and Lipschitz continuous mapping between $\hh$ and $\bb$. 
\end{remark}

\section{The scalar potential formulation and its discretization}
\label{sec:scalar}
Based on the previous considerations, we can now establish the well-posedness of the scalar potential formulation \eqref{eq:scal1}--\eqref{eq:scal2} and appropriate finite element approximations thereof. These results follow by standard arguments, see e.g. \cite{Engertsberger23,Zeidler1990}, but for later reference, we state them in detail and provide short proofs.

\subsection{Scalar potential formulation}
For the subsequent analysis, we require the following conditions, which cover rather general cases arising in applications.
\begin{assumption} \label{ass:2}
$\Omega \subset \RR^d$, $d=2,3$ is a bounded Lipschitz domain which is 
homeomorphic to a ball. 
Further $0 \le \chi \in L^\infty(\Omega)$ and 
$U : \Omega \times \RR^d \to (-\infty,\infty]$ is measureable and for a.a. $x \in \Omega$, the function $U(x,\cdot) : \RR^d \to (-\infty,\infty]$ is $\sigma$-strongly convex and two times continuously differentiable with $\sigma>0$ independent of $x$.
Finally, $\js = \curl \hh_s$ for some $\hh_s \in H(\curl;\Omega) = \{u \in L^2(\Omega)^d : \curl u \in L^2(\Omega)^{d/(d-1)}\}$.
\end{assumption}
For details on the notation and the typical function spaces arising in the context of Maxwell's equations, we refer to \cite{Monk2003,Boffi2013}.
Since the topology of the domain $\Omega$ is assumed trivial, we may decompose any $\hh \in H(\curl;\Omega)$ into $\hh = \hh_s - \nabla \psi$ with a scalar potential $\psi \in H^1(\Omega)$, and verify that \eqref{eq:1}--\eqref{eq:3} is equivalent to \eqref{eq:scal0}--\eqref{eq:scal2}. 
It is not difficult to see that, if $\psi$ is a solution of these equations, then also $\psi + c$ with $c \in \RR$ is a solution. To guarantee uniqueness, we require that $\int_\Omega \psi \, dx = 0$ in the sequel. 
The weak form for this problem thus reads as follows. 
\begin{problem} \label{prob:scal}
Find $\psi \in H^1_*(\Omega) = \{v \in H^1(\Omega) : \int_\Omega v \, dx = 0\}$ such that 
\begin{align} \label{eq:scal_var}
\int_\Omega \nabla_\hh w^*(\hh_s - \nabla \psi) \cdot \nabla v \, dx = 0 \qquad \forall v \in H_*^1(\Omega).
\end{align}
\end{problem}
The variational identity \eqref{eq:scal_var} is derived in the usual manner, by multiplying \eqref{eq:scal1} with a test function $v$, integration over the domain $\Omega$, integration-by-parts, and noting that due to \eqref{eq:scal2} the boundary terms vanish. 
The dependence of $w^*$ on the spatial point $x$ was dropped in our notation.
By using the results of the previous section, we immediately obtain the following assertion, already announced in \cite{Egger2024quasi}.
\begin{theorem}
Let Assumption~\ref{ass:2} hold. 
Then Problem~\ref{prob:scal} admits a unique solution which is characterized equivalently as the unique minimizer of
\begin{align} \label{eq:scal_min}
\min_{v \in H^1_*(\Omega)} \int_\Omega w^*(\hh_s - \nabla v) \, dx.
\end{align}
\end{theorem}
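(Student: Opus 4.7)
The plan is to recast Problem~\ref{prob:scal} as an operator equation $A\psi = 0$ in the Hilbert space $H^1_*(\Omega)$ and then apply the Zarantonello theorem for strongly monotone and Lipschitz operators. Define the operator $A : H^1_*(\Omega) \to H^1_*(\Omega)^*$ by
\begin{align*}
\langle A\psi, v\rangle = \int_\Omega \nabla_\hh w^*(\hh_s - \nabla \psi) \cdot \nabla v \, dx.
\end{align*}
The pointwise bounds from Theorem~\ref{thm:w}, together with the Cauchy--Schwarz inequality, show that $A\psi \in H^1_*(\Omega)^*$ is well-defined for every $\psi \in H^1_*(\Omega)$, and that $A$ is Lipschitz continuous with constant $(1+\sigma\mu_0)/\sigma$ with respect to the $H^1$ semi-norm.

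Next I would verify strong monotonicity. Using the pointwise strong monotonicity $\mu_0$ established in Theorem~\ref{thm:w}, one obtains
\begin{align*}
\langle A\psi - A\tilde\psi, \psi - \tilde\psi \rangle \ge \mu_0 \|\nabla(\psi - \tilde\psi)\|_{L^2(\Omega)}^2,
\end{align*}
and the Poincaré--Wirtinger inequality on $H^1_*(\Omega)$ upgrades the right-hand side to $c \|\psi - \tilde\psi\|_{H^1(\Omega)}^2$ for some $c>0$ depending on $\Omega$. Zarantonello's theorem (or equivalently the Browder--Minty theorem together with coercivity) then yields the existence of a unique $\psi \in H^1_*(\Omega)$ solving $A\psi = 0$, which is exactly \eqref{eq:scal_var}.

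For the equivalence with \eqref{eq:scal_min}, I would first observe that $w^*$ has at most quadratic growth: integrating the Lipschitz bound along a line through the origin yields $|w^*(\hh)| \le |w^*(0)| + C(1+|\hh|^2)$, so the functional $J(v) = \int_\Omega w^*(\hh_s - \nabla v) \, dx$ is finite on $H^1_*(\Omega)$ because $\hh_s \in L^2(\Omega)^d$. Strong monotonicity of $\nabla_\hh w^*$ implies that $w^*$ is $\mu_0$-strongly convex pointwise, hence $J$ is strongly convex on $H^1_*(\Omega)$ by the Poincaré--Wirtinger inequality, and $J$ is continuously Fréchet differentiable with $J'(v) = A v$. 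Consequently $J$ admits a unique minimizer, which is characterized by $A\psi = 0$ and therefore coincides with the unique solution of Problem~\ref{prob:scal}.

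The main obstacle in this plan is rather a matter of bookkeeping than of mathematical difficulty: one must carefully justify the quadratic growth of $w^*$ and the differentiability of $J$ under the integral sign, which rely on the Lipschitz bound of $\nabla_\hh w^*$ and on $\hh_s \in L^2(\Omega)^d$. Everything else is a direct application of standard Hilbert space results for strongly monotone operators and convex minimization.
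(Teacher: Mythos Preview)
Your proposal is correct and follows essentially the same route as the paper's proof: use Theorem~\ref{thm:w} to obtain pointwise strong monotonicity and Lipschitz continuity of $\nabla_\hh w^*$, apply Zarantonello's lemma to the induced operator on $H^1_*(\Omega)$, and identify \eqref{eq:scal_var} as the first-order optimality condition of the convex functional \eqref{eq:scal_min}. The only difference is that you spell out the intermediate steps (Poincar\'e--Wirtinger, quadratic growth of $w^*$, Fr\'echet differentiability of $J$) that the paper leaves implicit.
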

\begin{proof}
By Assumption~\ref{ass:2} and Theorem~\ref{thm:w},
the co-energy density $w^*(x,\cdot)$ is continuously differentiable and $\nabla_\hh w^*(x,\cdot)$ is strongly monotone and Lipschitz continuous for all $x \in \Omega$. 
Hence by Zarantonello's Lemma~\cite[Theorem 25.B]{Zeidler1990}, the variational problem \eqref{eq:scal_var} admits a unique solution.
Moreover, \eqref{eq:scal_var} are the first order optimality conditions for \eqref{eq:scal_min}, which by convexity characterize the minimizer.
\end{proof}

\subsection{Finite element discretization}
We now study the numerical solution of the variational problem \eqref{eq:scal_var} by appropriate finite element methods. For ease of presentation, we only consider the lowest order approximation on simplicial grids.

Thus let $\Th = \{T\}$ be a geometrically conforming partition of $\Omega \subset \RR^d$ into simplices, i.e., triangles ($d=2$) or tetrahedra ($d=3$). 
We denote by 
\begin{align}
V_h = \{v_h \in H^1_*(\Omega): v_h |_{T} \in P_1(T) \ \forall T \in \Th\}
\end{align}
the space of continuous and piecewise linear finite element functions~\cite{Boffi2013,Braess2007}, 
which additionally have integral zero. 
In order to adequately handle the nonlinear terms in the implementation, 
we approximate the $L^2$-scalar product on $\Omega$ by 
\begin{align} \label{eq:int_h}
(\uu_h,\vv_h)_h = \sum\nolimits_{T \in \Th}  \uu_h(x_T) \cdot \vv_h(x_T) \, |T|,
\end{align}
where $x_T$ and $|T|$ denotes the barycenter and the area of the element $T$. 
The corresponding norm is denoted by $\|\uu_h\|_h = \sqrt{(\uu_h,\uu_h)_h}$.
As approximation for the scalar potential formulation \eqref{eq:scal_var}, 
we then consider the following system.
\begin{problem} \label{prob:scal_h}
Find $\psi_h \in V_h$ such that 
\begin{align} \label{eq:scal_var_h}
(\nabla_\hh w^*(\hh_s - \nabla \psi_h),v_h)_h &= 0 \qquad \forall v_h \in V_h.
\end{align}
\end{problem}
In order to make this formulation well-defined, we require some additional regularity of the problem data, which we formally introduce for later reference.
\begin{assumption} \label{ass:3}
Let Assumption~\ref{ass:2} hold and $\Th$ be a geometrically conforming simplicial partition of the domain $\Omega$. Further assume that $\chi$, $U$, and $\hh_s$ are piecewise continuous functions in space with respect to the partition $\Th$. 
\end{assumption}
In a similar manner as on the continuous level, we obtain the following result.
\begin{theorem} \label{thm:scal_h}
Let Assumption~\ref{ass:3} hold. Then Problem~\ref{prob:scal_h} admits a unique solution which is equivalently characterized as the unique minimizer of 
\begin{align} \label{eq:scal_min_h}
\min_{v_h \in V_h}  (w^*(\hh_s - \nabla v_h),1)_h.
\end{align}
\end{theorem}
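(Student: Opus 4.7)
The plan is to mirror the continuous argument used for Theorem~\ref{thm:w}'s corollary on the scalar potential formulation, but with one key observation that makes the quadrature-based formulation behave essentially like the continuous $L^2$ formulation when applied to finite element gradients. First I would note that, under Assumption~\ref{ass:3}, the data $\chi$, $U$, and $\hh_s$ are well-defined and continuous at each barycenter $x_T$, so that the discrete nonlinear operator $(\nabla_\hh w^*(\hh_s - \nabla v_h),\nabla(\cdot))_h$ and the discrete energy $(w^*(\hh_s - \nabla v_h),1)_h$ are both well-defined on $V_h$.

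The crucial observation is that for $v_h, w_h \in V_h$, the gradient $\nabla v_h|_T$ is constant on each simplex $T \in \Th$, hence the barycentric quadrature rule \eqref{eq:int_h} is exact for pointwise products of gradients:
\begin{align*}
(\nabla v_h, \nabla w_h)_h = \sum_{T \in \Th} \nabla v_h|_T \cdot \nabla w_h|_T\, |T| = \int_\Omega \nabla v_h \cdot \nabla w_h\, dx,
\end{align*}
and in particular $\|\nabla v_h\|_h = \|\nabla v_h\|_{L^2(\Omega)}$. Combining this identity with the strong monotonicity and Lipschitz bounds from Theorem~\ref{thm:w} applied pointwise at the barycenters, I would deduce that the discrete operator $F_h : V_h \to V_h^*$ defined by $\langle F_h(\psi_h),v_h\rangle := (\nabla_\hh w^*(\hh_s - \nabla \psi_h),\nabla v_h)_h$ satisfies
\begin{align*}
\langle F_h(\psi_h) - F_h(\phi_h), \psi_h - \phi_h \rangle \ge \mu_0 \|\nabla(\psi_h-\phi_h)\|_{L^2(\Omega)}^2,
\end{align*}
and an analogous Lipschitz estimate with constant $(1+\sigma\mu_0)/\sigma$. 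Together with the Poincaré inequality on $H^1_*(\Omega)$, which applies since $V_h \subset H^1_*(\Omega)$, this yields strong monotonicity and Lipschitz continuity of $F_h$ on $V_h$ with respect to the $H^1$-norm. Existence and uniqueness of a solution to \eqref{eq:scal_var_h} then follows from Zarantonello's lemma, exactly as on the continuous level.

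For the equivalent minimization formulation, I would show that the discrete energy $\mathcal{E}_h(v_h) := (w^*(\hh_s - \nabla v_h),1)_h$ is continuously differentiable on $V_h$ with derivative given by $F_h$, which follows from Theorem~\ref{thm:w} and the chain rule applied elementwise, and that it is strictly (in fact strongly) convex on $V_h$ by the pointwise convexity of $w^*(x,\cdot)$ combined with the exactness identity above. Strict convexity together with coercivity (also inherited from the pointwise strong convexity of $w^*$ via the gradient quadrature identity and Poincaré) then gives a unique minimizer, whose first-order optimality condition coincides with \eqref{eq:scal_var_h}. The main conceptual obstacle is essentially the quadrature step: one has to confirm that the midpoint rule does not destroy the monotonicity structure. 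This is resolved precisely by the exactness observation for piecewise constant gradients, after which everything reduces cleanly to a finite-dimensional version of the continuous proof.
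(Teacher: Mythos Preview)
Your proposal is correct and follows essentially the same approach as the paper: verify that the discrete operator is strongly monotone and Lipschitz continuous, apply Zarantonello's lemma, and identify \eqref{eq:scal_var_h} as the first-order optimality condition of the convex functional \eqref{eq:scal_min_h}. The paper's proof is extremely terse (three sentences), so your explicit observation about exactness of the barycentric rule on piecewise constant gradients and the pointwise application of Theorem~\ref{thm:w} simply fill in details the paper leaves implicit.
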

\begin{proof}
Problem \eqref{eq:scal_var_h} amounts to a finite dimensional nonlinear system with strongly monotone and Lipschitz continuous operator. Existence of a unique solution thus follows by the Zarantonello Lemma \cite[Theorem 25.B]{Zeidler1990}. 
Condition \eqref{eq:scal_var_h} again amounts to the first order optimality conditions for the convex minimization problem \eqref{eq:scal_min_h}.
\end{proof}

\begin{remark}
The result of Theorem~\ref{thm:scal_h} can be generalized in various directions, e.g., domains with curved boundaries or higher order approximations, and the variational structure is also suitable for a discretization error analysis. Some related results for smooth anhysteretic material laws can be found in \cite{Engertsberger23,Egger2024ho}. 
\end{remark}

\section{Semi-smoothness of the material law}
\label{sec:semismooth}


Due to the non-smooth term in the material model \eqref{eq:3}, the corresponding hysteresis 
operator $\bb(\hh) = \nabla_\hh w^*(\hh)$ is not differentiable in a classical sense if $\chi > 0$. 
We will now show, however, that the local material model $\bb(\hh)$ is at least \emph{semi-smooth}, i.e., it can be approximated well by appropriate linearizations~\cite{Ulbrich2011,Qi1993b}. 
This will be used in the following section to propose a semi-smooth Newton method and establish its local and global convergence properties.

\subsection{Generalized derivatives and semi-smoothness}
Let us first recall some basic terms and notation of semi-smooth analysis; see~\cite{Ulbrich2011,clarke1990} for an introduction to the field. Let $F : W \subset \RR^n \to \RR^m$ be a locally Lipschitz continuous function. Then by Rademacher's Theorem the set
$D_F(x) := \{\hat{x} \in \RR^n : F \text{  is differentiable at } \hat{x}\}$ is dense in $W$. The \emph{Bouligand-subdifferential} is defined by the set-valued mapping
\begin{align}
    \partial^B F(x) := \{M \in \RR^{m \times n}: \exists (x_k) \in D_F : x_k \rightarrow x, \nabla F(x_k) \rightarrow M\},
\end{align}
which by construction is nonempty for every $x \in W$. \emph{Clarke's generalized Jacobian} of $F$ is defined as the convex hull $\partial F(x) = \text{co}(\partial^B F(x))$ of this set. Finally, the function $F$ is called  \emph{semi-smooth at $x$}, if it is locally Lipschitz continuous, directionally differentiable, and satisfies the approximation property 
\begin{align} \label{eq:smismooth}
\sup_{L \in \partial F(x + \delta x)} |F(x + \delta x) - F(x) - L \delta x| = o(|\delta x|) \quad \text{for} \quad \delta x \to 0.
\end{align}
It is called \emph{semi-smooth}, if it satisfies these properties for every point $x \in W$. 

\subsection{Semi-smoothness of the hysteresis operator} 
The remainder of this section will be devoted to proving the following key result of our paper. 
\begin{theorem}
    \label{thm:semismooth:b}
    Let Assumption \ref{ass:1} hold and $\jj=\jj(\hh)$ denote the minimizer of \eqref{eq:3}. Then the material law $\bb(\hh) = \nabla_{\hh} w^*(\hh)$ is semi-smooth and the matrix
    \begin{align}
    \label{eq:slant:b}
    \mathbb{S}_{\bb} = \left\{
    \begin{array}{ll}
    \mu_0 I, & \text{ if } \jj = \jj_p, \\
    \mu_0 I + \Big( \nabla_{\jj \jj}^2 U(\jj) + \frac{\chi}{|\jj - \jj_p|} \Big( I - \frac{(\jj - \jj_p) \,\otimes\, (\jj - \jj_p)}{|\jj - \jj_p|^2}\Big) \Big)^{-1}, & \text{ if } \jj \neq \jj_p, 
    \end{array}
    \right.
    \end{align}
    defines an element in the generalized Jacobian $\partial_{\hh} \bb(\hh)$, suitable for computations.
\end{theorem}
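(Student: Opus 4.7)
The plan is to partition $\RR^d$ into the \emph{elastic} region $E = \{\hh : \jj(\hh) = \jj_p\}$ and its complement, the \emph{plastic} region $P = \{\hh : \jj(\hh) \neq \jj_p\}$, show that $\bb$ is of class $C^1$ on each of the open sets $E^\circ$ and $P$ with Jacobians matching the two branches of $\mathbb{S}_\bb$, and then deduce semi-smoothness from the resulting piecewise-$C^1$ structure. From the subgradient inclusion~\eqref{eq:inclusion} applied at $\jj = \jj_p$, the elastic region is the closed ball $E = \overline{B}(\nabla_\jj U(\jj_p), \chi)$, whose boundary $\partial E$ is a smooth $(d-1)$-sphere; continuity of $\bb$ across this interface already follows from Theorem~\ref{thm:w}.

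On $E^\circ$ one has $\jj(\hh) = \jj_p$ constantly, so $\bb(\hh) = \mu_0 \hh + \jj_p$ has classical Jacobian $\mu_0 I$, matching the first branch of~\eqref{eq:slant:b}. On $P$, the optimality condition reads $F(\hh,\jj) := \nabla_\jj U(\jj) - \hh + \chi (\jj - \jj_p)/|\jj - \jj_p| = 0$ and is smooth in $\jj$. A direct computation identifies $\partial_\jj F$ with the matrix $H$ inside the inverse in~\eqref{eq:slant:b}; $H$ is positive definite since $\nabla^2_{\jj\jj} U$ is $\sigma$-positive definite and $I - (\jj-\jj_p)\otimes(\jj-\jj_p)/|\jj - \jj_p|^2$ is an orthogonal projection. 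The implicit function theorem thus yields $\jj \in C^1(P)$ with $d\jj/d\hh = H^{-1}$, and the chain rule delivers $d\bb/d\hh = \mu_0 I + H^{-1}$, matching the second branch. At boundary points $\hh^* \in \partial E$ one has $\jj(\hh^*) = \jj_p$, so the formula again reduces to $\mu_0 I$; this matrix belongs to $\partial^B \bb(\hh^*) \subset \partial \bb(\hh^*)$ as the limit of classical Jacobians from points in $E^\circ$ approaching $\hh^*$.

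To establish semi-smoothness, I would argue that $\bb$ is piecewise $C^1$ relative to the smooth partition of $\RR^d$ induced by the separating manifold $\partial E$. Combined with the global Lipschitz bound of Theorem~\ref{thm:w}, the standard $PC^1$ result from non-smooth analysis, e.g.\ in~\cite{Ulbrich2011}, then yields both directional differentiability and the approximation property~\eqref{eq:smismooth}. The main technical obstacle is to control the behavior of $H^{-1}$ as $\hh$ approaches $\partial E$ from the plastic side: although $H$ diverges tangentially to $\jj - \jj_p$ as $|\jj - \jj_p| \to 0$, the uniform bound $\|H^{-1}\| \le 1/\sigma$ (valid since $H \succeq \sigma I$) ensures that $H^{-1}$ remains bounded near the interface, which is what is needed for the $PC^1$ machinery and the semi-smoothness estimate to go through.
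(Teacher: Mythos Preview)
Your approach differs substantially from the paper's. Rather than partitioning into elastic and plastic regions and invoking $PC^1$ theory, the paper introduces an auxiliary multiplier $\lambda$, rewrites the optimality condition~\eqref{eq:inclusion} as a nonlinear complementarity system (Lemma~\ref{lem:system}), and then as a single equation $T((\jj,\lambda);\hh)=0$ built from the semi-smooth NCP function $\ncp(a,b)=\max(0,a+b)-b$ (Lemma~\ref{lem:T}). Semi-smoothness of $\jj(\hh)$ is then obtained from a semi-smooth implicit function theorem (Lemma~\ref{lem:implicit}), which treats all points uniformly and avoids any separate analysis of the interface $\partial E$. Your route via the classical implicit function theorem on $P$ is more direct and makes the Jacobian formula transparent, but it pushes all the difficulty onto the boundary.

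The gap lies in your appeal to the ``standard $PC^1$ result.'' That result requires $\bb$ to be, near each point, a continuous selection of finitely many functions that are each $C^1$ on a \emph{full open neighborhood} of that point. The elastic branch $\mu_0\hh+\jj_p$ is globally $C^1$, but the plastic branch is only defined on $P$, and you have not shown that it extends $C^1$ across $\partial E$. The bound $\|H^{-1}\|\le 1/\sigma$ is necessary but not sufficient: there exist Lipschitz functions that are $C^1$ on each of two complementary open sets with uniformly bounded derivatives yet fail to be semi-smooth at the interface (take $f(x)=x^2\sin(1/x)$ for $x>0$ and $f\equiv 0$ for $x\le 0$; the derivative from the right oscillates and the approximation property fails at $0$). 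To close your argument you would have to produce a $C^1$ extension of the plastic solution to a neighborhood of each $\hh^*\in\partial E$; this can in fact be done---parametrize $\jj=\jj_p+re$ with $r\ge 0$, $|e|=1$, observe that $(r,e)\mapsto \nabla_\jj U(\jj_p+re)+\chi e$ is a $C^1$ local diffeomorphism even at $r=0$, and extend to small negative $r$---but this step is absent from your outline, and without it neither the $PC^1$ structure nor directional differentiability at $\partial E$ is established. The paper's NCP reformulation sidesteps this issue entirely.
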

\begin{remark}
For $\chi=0$, the magnetic polarization $\jj(\hh)$ is characterized as minimizer of a smooth functional and by the implicit function theorem, one can see that $\nabla_\hh \jj(\hh) = \nabla_{\jj\jj}^2 U(\jj)^{-1}$. In this case $\mathbb{S}_{\bb} = \mu_0 I + \nabla_{\jj\jj}^2 U(\jj)^{-1} = \nabla_\jj \bb(\hh)$ amounts to the standard Jacobian of the material law. 
For $\chi>0$, the minimizer $\jj(\hh)$ is, however, not differentiable in the classical sense. The formula in the second line can however still be motivated by formal computations, as was done in \cite{Jacques2015}. 
Theorem~\ref{thm:semismooth:b} provides an extension and a  rigorous justification of these formal computations.
\end{remark}

For the proof of Theorem~\ref{thm:semismooth:b}, we utilize arguments employed in \cite{Hager2009,Wieners2011} for the analysis of material models in elasto-plasticity.
In a first step, we characterize $\jj(\hh)$ as the unique solution of a nonlinear complementarity problem, which is equivalent to a nonlinear system $T(\jj(\hh),\hh)=0$ involving a semi-smooth mapping $T$. An extension of the implicit function theorem then yields semi-smoothness of $\jj(\hh)$ and a suitable candidate in the generalized Jacobian $\partial_\hh \jj(\hh)$. The corresponding result for the hysteresis operator $\bb(\hh)=\mu_0 \hh + \jj(\hh))$ then follows by the additive structure.

\subsection{Equivalent characterizations of $\jj(\hh)$}
In a first step, we characterize the minimizer of \eqref{eq:3} as the unique solution to a nonlinear complementary system.
\begin{lemma}\label{lem:system}
Let Assumption \ref{ass:1} hold. Then $\jj \in \RR^d$ is the unique minimizer of \eqref{eq:3} if, and only if, there exists a unique $\lambda \in \RR$ such that $(\jj,\lambda)$ solves
\begin{align}
    \jj - \jj_p + \lambda (\nabla_{\jj} U(\jj) - \hh) &= 0 \label{eq:system:1}\\
    \lambda &\geq 0 \label{eq:system:2}\\
    |\nabla_{\jj} U(\jj) - \hh| - \chi &\leq 0 \label{eq:system:3}\\
    \lambda (|\nabla_{\jj} U(\jj) - \hh| - \chi) &= 0. \label{eq:system:4}
\end{align}
\end{lemma}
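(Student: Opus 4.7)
The plan is to reduce the claim to showing that the NCP system \eqref{eq:system:1}--\eqref{eq:system:4} is equivalent to the subdifferential inclusion \eqref{eq:inclusion}, which by strong convexity of the objective in \eqref{eq:3} (Assumption~\ref{ass:1}) already characterizes the unique minimizer. The multiplier $\lambda$ will play the role of the ratio $|\jj-\jj_p|/\chi$, so the translation between the two formulations amounts to a reparametrization of the subgradient of $\chi|\cdot-\jj_p|$ at $\jj$.

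For the direction from \eqref{eq:inclusion} to the NCP system, I would split on whether $\jj=\jj_p$. If $\jj\neq\jj_p$, the first case of \eqref{eq:inclusion} reads $\hh-\nabla_\jj U(\jj)=\chi(\jj-\jj_p)/|\jj-\jj_p|$; this already has magnitude $\chi$, and setting $\lambda:=|\jj-\jj_p|/\chi$ yields \eqref{eq:system:1} by direct rearrangement, while \eqref{eq:system:2}--\eqref{eq:system:4} follow since $|\nabla_\jj U(\jj)-\hh|=\chi$. If $\jj=\jj_p$, the second case of \eqref{eq:inclusion} gives $|\nabla_\jj U(\jj)-\hh|\leq\chi$, and $\lambda:=0$ satisfies all four conditions trivially. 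For the converse, I would case-split on $\lambda>0$ versus $\lambda=0$. If $\lambda>0$, then \eqref{eq:system:4} forces $|\nabla_\jj U(\jj)-\hh|=\chi$, and \eqref{eq:system:1} rewrites as $\jj-\jj_p=-\lambda(\nabla_\jj U(\jj)-\hh)$; taking magnitudes gives $|\jj-\jj_p|=\lambda\chi>0$, and taking directions recovers the first branch of \eqref{eq:inclusion}. If $\lambda=0$, then \eqref{eq:system:1} gives $\jj=\jj_p$ and \eqref{eq:system:3} is exactly the second branch of \eqref{eq:inclusion}.

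Uniqueness of $\jj$ is inherited from the unique minimizer of \eqref{eq:3}. Uniqueness of $\lambda$ is forced either by $\lambda=|\jj-\jj_p|/\chi$ when $\jj\neq\jj_p$, or, when $\jj=\jj_p$, by combining \eqref{eq:system:1} with \eqref{eq:system:4} in the regime $\chi>0$. I do not expect a real obstacle here: the NCP system is a classical reformulation of the subdifferential of the norm, and once the case analysis is organized around $\lambda=0$ versus $\lambda>0$, every step is an elementary algebraic rearrangement. The only point requiring a bit of care will be the uniqueness of $\lambda$ in the degenerate configuration $\jj=\jj_p$, where it is \eqref{eq:system:4} rather than \eqref{eq:system:1} that pins $\lambda$ down.
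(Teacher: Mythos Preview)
Your proposal is correct and follows essentially the same approach as the paper: both directions are reduced to the optimality inclusion \eqref{eq:inclusion} and handled by the same case split on $\jj=\jj_p$ versus $\jj\neq\jj_p$ (equivalently $\lambda=0$ versus $\lambda>0$), with $\lambda=|\jj-\jj_p|/\chi$ emerging in the nontrivial case. Your remark that uniqueness of $\lambda$ hinges on $\chi>0$ is in fact a subtlety the paper's own proof glosses over.
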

\begin{proof}
Let $\jj \in \RR^d$ be the unique solution of \eqref{eq:3}, Then $\jj$ is characterized by the first order necessary and sufficient optimality conditions
\begin{align}
    0 \in \nabla_{\jj} U(\jj) - \hh + \chi \partial_{\jj} |\jj - \jj_p|.
    \label{eq:foc:inclusion}
\end{align}
For the case $\jj = \jj_p$ the inclusion \eqref{eq:foc:inclusion} implies that $|\nabla_{\jj} U(\jj) - \hh| \leq \chi$ holds. Then $\lambda=0$ is the only choice to solve the system \eqref{eq:system:1}--\eqref{eq:system:4}. 
If $\jj \neq \jj_p$ the first order optimality condition \eqref{eq:foc:inclusion} has the form
\begin{align}
    0 = \nabla_{\jj} U(\jj) - \hh + \chi \frac{\jj - \jj_p}{|\jj - \jj_p|},
    \label{eq:foc:inclusion:diff}
\end{align}
which together with \eqref{eq:system:1} implies $\lambda = \frac{|\jj - \jj_p|}{\chi}$. From equation \eqref{eq:foc:inclusion:diff} we further conclude that $|\nabla_{\jj} U(\jj) - \hh| = \chi$, which also shows that \eqref{eq:system:3} and \eqref{eq:system:4} hold.

Now let ($\jj,\lambda$) be a solution of \eqref{eq:system:1}--\eqref{eq:system:4}. 
If $\jj = \jj_p$ we conclude from \eqref{eq:system:3} that the first order optimality condition \eqref{eq:foc:inclusion:diff} of problem \eqref{eq:3} is fulfilled, which is sufficient since the problem is strictly convex.
If $\jj \neq \jj_p$, then from \eqref{eq:system:1} and \eqref{eq:system:4}, we see that $\lambda \neq 0$ and  $|\nabla_{\jj} U(\jj) - \hh| = \chi$. 
Using this in \eqref{eq:system:1} gives $\lambda = \frac{|\jj - \jj_p|}{\chi}$, which yields the first order optimality condition for \eqref{eq:3} in the differentiable case.
\end{proof}

This alternative characterization of $\jj(\hh)$ will play a major role in the subsequent proofs. Using standard arguments~\cite{Ulbrich2011}, we first transform the complementarity problem into another equivalent nonlinear system.
\begin{lemma} \label{lem:T}
Let Assumption~\ref{ass:1} hold. Then the solution $(\jj,\lambda)$ of problem \eqref{eq:system:1}--\eqref{eq:system:4} can be equivalently characterized by the nonlinear equation
\begin{align} \label{eq:T} 
    T((\jj,\lambda);\hh) = \left(\begin{array}{c} \jj - \jj_p + \lambda (\nabla_{\jj} U(\jj) - \hh) \\ \ncp(\frac{1}{2}(|\nabla_{\jj}U(\jj) - \hh|^2 - \chi^2),\lambda) \end{array}  \right) = 0,
\end{align}
with complementarity function $\ncp(x_1,x_2) := \max(0,x_1+x_2) - x_2$. 
\end{lemma}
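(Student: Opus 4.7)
The plan is to verify that the two-component equation $T((\jj,\lambda);\hh) = 0$ is equivalent to the complementarity system \eqref{eq:system:1}--\eqref{eq:system:4}. Since the first component of $T$ coincides verbatim with \eqref{eq:system:1}, the entire work reduces to showing that the scalar equation $\ncp\bigl(\tfrac12(|\nabla_\jj U(\jj)-\hh|^2 - \chi^2),\lambda\bigr) = 0$ encodes precisely the three conditions \eqref{eq:system:2}--\eqref{eq:system:4}.

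First I would establish the general property that, for $x_1,x_2 \in \RR$, one has $\ncp(x_1,x_2) = 0$ if and only if $x_1 \le 0$, $x_2 \ge 0$, and $x_1 x_2 = 0$. This is a one-line case distinction on the sign of $x_1+x_2$: if $x_1 + x_2 \ge 0$ then $\ncp(x_1,x_2) = x_1$, so $\ncp = 0$ forces $x_1 = 0$ and hence $x_2 \ge 0$; if $x_1 + x_2 < 0$ then $\ncp(x_1,x_2) = -x_2$, so $\ncp = 0$ forces $x_2 = 0$ and $x_1 < 0$. The converse is equally immediate by direct substitution.

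Then I would apply this with $x_1 = \tfrac12(|\nabla_\jj U(\jj) - \hh|^2 - \chi^2)$ and $x_2 = \lambda$. Because $|\nabla_\jj U(\jj)-\hh|$ and $\chi$ are both nonnegative, $x_1 \le 0$ is equivalent to \eqref{eq:system:3}, and $x_2 \ge 0$ is exactly \eqref{eq:system:2}. For the complementarity, I would use the factorization
\begin{equation*}
x_1 x_2 \;=\; \tfrac{\lambda}{2}\bigl(|\nabla_\jj U(\jj)-\hh| - \chi\bigr)\bigl(|\nabla_\jj U(\jj)-\hh| + \chi\bigr),
\end{equation*}
so that in the generic case $|\nabla_\jj U(\jj)-\hh| + \chi > 0$ the vanishing of $x_1 x_2$ is equivalent to \eqref{eq:system:4}, while in the degenerate case $\chi = 0$ with $\nabla_\jj U(\jj) = \hh$ both expressions vanish trivially. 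Combining this with Lemma~\ref{lem:system} yields the claimed equivalence.

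There is no real obstacle here; the lemma is a standard NCP-reformulation trick, and the only subtlety worth remarking on is the choice of $\tfrac12(|\cdot|^2 - \chi^2)$ in place of the more immediate $|\cdot|-\chi$. Squaring removes the non-differentiability of the Euclidean norm at the origin and renders the argument of $\ncp$ continuously differentiable in $\jj$, a feature that will be essential for the semi-smoothness analysis of $T$ underlying the proof of Theorem~\ref{thm:semismooth:b} in the next subsection.
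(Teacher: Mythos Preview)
Your proposal is correct and follows essentially the same route as the paper: the paper simply states the standard equivalence $\ncp(x_1,x_2)=0 \Leftrightarrow x_1\le 0,\; x_2\ge 0,\; x_1x_2=0$ and declares the claim immediate, whereas you spell out the case distinction and the passage from $x_1x_2=0$ to \eqref{eq:system:4} in full. Your closing observation about why the squared argument is preferred over $|\cdot|-\chi$ is a useful addition that the paper leaves implicit.
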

\begin{proof}
It is not difficult to see that 
\begin{align*}
    \ncp(x_1,x_2) = 0 \quad \Longleftrightarrow \quad x_1 \leq 0, \quad x_2 \geq 0, \quad x_1 x_2 = 0,
\end{align*}
which immediately provides the claim of the lemma.
\end{proof}

\subsection{Semi-smoothness of the minimizer $\jj(\hh)$}
We will utilize the following extension of the implicit function theorem for semi-smooth functions; see~e.g.~\cite{Gowda2004}.
\begin{lemma} \label{lem:implicit}
Let $T : \RR^m \times \RR^n \to \RR^m$ be semi-smooth in a neighborhood of $(x^*;y^*)$. Assume $T(x^*;y^*) = 0$ and that all matrices in $\partial_x T(x^*;y^*)$ are non-singular. Then, there exists an open neighborhood $N(y^*)$ of $y^*$ and a function $X(y^*) : N(y^*) \mapsto \RR^m$ which is locally Lipschitz and semi-smooth and such that $X(y^*) = x^*$ and $T(X(y);y) = 0$ for all $y \in N(y^*)$. Moreover, if $[A_x,A_y] \in \partial^B T(X(y);y)$, we have
\begin{align}
  -A_x^{-1}A_y \in \partial^B X(y) \subset \partial X(y).
\end{align}
\end{lemma}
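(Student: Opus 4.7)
The plan is to exploit the decomposition $\bb(\hh) = \mu_0 \hh + \jj(\hh)$ from Theorem~\ref{thm:w}: since the first summand contributes the smooth $\mu_0 I$ appearing in both branches of \eqref{eq:slant:b}, it suffices to prove that $\hh \mapsto \jj(\hh)$ is semi-smooth and to exhibit an element of $\partial_\hh \jj(\hh)$ matching either $0$ or the inverse matrix in the second branch. To this end, I would apply the semi-smooth implicit function theorem (Lemma~\ref{lem:implicit}) to the augmented system $T((\jj,\lambda);\hh)=0$ from Lemma~\ref{lem:T}, whose unique solution $(\jj(\hh),\lambda(\hh))$ encodes the minimizer together with its Lagrange multiplier. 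This reduces the task to (i) semi-smoothness of $T$, (ii) invertibility of a representative $A_x \in \partial^B_{(\jj,\lambda)} T$, and (iii) explicit evaluation of $-A_x^{-1}A_y$.

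Step (i) is straightforward: the first block of $T$ is $C^2$ in $\jj,\hh$ by Assumption~\ref{ass:1} and polynomial in $\lambda$, while the second block composes the piecewise affine (hence Lipschitz and semi-smooth) function $\ncp$ with the $C^1$ arguments $q(\jj,\hh)=\tfrac{1}{2}(|\nabla_\jj U(\jj)-\hh|^2-\chi^2)$ and $\lambda$. The semi-smooth chain rule then yields semi-smoothness of $T$.

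Steps (ii) and (iii) are handled case by case. Set $\eta := \nabla_\jj U(\jj) - \hh$, so that \eqref{eq:system:1} reads $\jj - \jj_p = -\lambda \eta$. If $\jj \neq \jj_p$, then by Lemma~\ref{lem:system} one has $\lambda>0$ and $|\eta|=\chi$, so $q+\lambda>0$ locally and $\ncp$ is smooth in a neighborhood with gradient $(1,0)$. The resulting Jacobians
\begin{align*}
A_x = \begin{pmatrix} I + \lambda \nabla^2_{\jj\jj} U(\jj) & \eta \\ \eta^\top \nabla^2_{\jj\jj} U(\jj) & 0 \end{pmatrix},
\qquad
A_y = \begin{pmatrix} -\lambda I \\ -\eta^\top \end{pmatrix}
\end{align*}
form a saddle-point system whose invertibility follows from $\nabla^2_{\jj\jj} U(\jj) \succeq \sigma I$ and $\eta \neq 0$ via a Schur complement. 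Eliminating $\delta\lambda$ through the second row and using $|\eta|^2=\chi^2$ reduces the linearized system to $\bigl(\nabla^2_{\jj\jj}U(\jj)+\tfrac{1}{\lambda}(I-\eta\eta^\top/\chi^2)\bigr)\delta\jj=\delta\hh$; substituting $\lambda=|\jj-\jj_p|/\chi$ and $\eta/|\eta| = -(\jj-\jj_p)/|\jj-\jj_p|$ recovers exactly the inverse block in \eqref{eq:slant:b}. For $\jj=\jj_p$ one has $\lambda=0$ and $|\eta|\le \chi$; on the open set $|\eta|<\chi$ the second block of $T$ reduces to $-\lambda$, so $A_x$ is block triangular with identity upper-left block and $A_y=0$, yielding $\nabla_\hh \jj = 0$ and therefore $\mathbb{S}_\bb = \mu_0 I$.

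The principal obstacle will be the algebraic inversion in the case $\jj\neq \jj_p$: recognizing that eliminating the multiplier converts the rank-one correction $\eta\eta^\top/\chi^2$ into the orthogonal projector onto $(\jj-\jj_p)^\perp$ and that the prefactor $1/\lambda$ equals $\chi/|\jj-\jj_p|$ is the decisive simplification that matches \eqref{eq:slant:b}. A secondary subtlety is the transition case $\jj=\jj_p$ with $|\eta|=\chi$, where the Bouligand subdifferential of $\ncp$ at the kink is set-valued; here I would argue that the candidate $\mu_0 I$ still belongs to Clarke's generalized Jacobian $\partial \bb(\hh)$ by approaching from the interior $|\eta|<\chi$, where classical differentiability holds and the candidate is attained, and invoking the closed convex hull structure of $\partial \bb(\hh)$.
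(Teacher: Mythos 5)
Your proposal does not prove the statement in question. Lemma~\ref{lem:implicit} is the semi-smooth implicit function theorem itself: given a semi-smooth $T$ with $T(x^*;y^*)=0$ and all matrices in $\partial_x T(x^*;y^*)$ nonsingular, one must \emph{construct} the locally Lipschitz, semi-smooth solution map $X(y)$ and verify that $-A_x^{-1}A_y$ lies in $\partial^B X(y)$. What you have written instead is an application of this lemma: you take it as given and use it to establish semi-smoothness of $\jj(\hh)$ and the formula \eqref{eq:slant:b} for $\mathbb{S}_{\bb}$. That is the content of Lemma~\ref{lem:J} and Theorem~\ref{thm:semismooth:b}, not of Lemma~\ref{lem:implicit}, so the argument is circular with respect to the statement you were asked to prove. (For what it is worth, your case analysis of the system $T((\jj,\lambda);\hh)=0$ does track the paper's proof of Lemma~\ref{lem:J} quite closely, including the Schur-complement elimination of the multiplier and the treatment of the boundary case $\jj=\jj_p$, $|\eta|=\chi$.)

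A genuine proof of Lemma~\ref{lem:implicit} would have to proceed along entirely different lines: for instance, apply Clarke's inverse function theorem for Lipschitz maps to $(x,y)\mapsto (T(x;y),y)$, using the nonsingularity of every element of $\partial_x T(x^*;y^*)$ to show that the generalized Jacobian of this augmented map is of maximal rank, thereby obtaining a Lipschitz local inverse and hence the solution map $X$; then prove that semi-smoothness is inherited by the implicit function and that the chain rule $A_x\,\partial^B X(y) + A_y \ni 0$ holds for Bouligand limits. None of these steps appears in your text. The paper itself does not prove the lemma either but cites Gowda (2004) for it; if you intend to treat it as a quoted auxiliary result, you should say so explicitly rather than present downstream computations as its proof.
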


By verifying the conditions of this auxiliary Lemma, we can show the following result, which is the key step for the proof of Theorem~\ref{thm:semismooth:b}.
\begin{lemma} \label{lem:J}
Let Assumption~\ref{ass:1}. Then for any $\hh \in \RR^d$, the system \eqref{eq:T} has a unique solution $(\jj,\lambda)$. Moreover, the mapping $\hh \mapsto \jj(\hh)$ is semi-smooth and  
\begin{align}
    \label{eq:slant:j}
    \mathbb{S}_\jj = \begin{cases}
    0, & \text{ if } \jj = \jj_p, \\
    \Big( \nabla_{\jj \jj}^2 U(\jj) + \frac{\chi}{|\jj - \jj_p|} \Big( I - \frac{(\jj - \jj_p) \,\otimes\, (\jj - \jj_p)}{|\jj - \jj_p|^2}\Big) \Big)^{-1}, & \text{ if } \jj \neq \jj_p,
    \end{cases}
\end{align}
is well-defined and an element in the generalized Jacobian $\partial \jj(\hh)$.
\end{lemma}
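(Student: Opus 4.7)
The plan is to apply the semi-smooth implicit function theorem (Lemma~\ref{lem:implicit}) to the nonlinear system $T((\jj,\lambda);\hh)=0$ introduced in Lemma~\ref{lem:T}. Existence and uniqueness of the pair $(\jj,\lambda)$ for every $\hh \in \RR^d$ is already supplied by Lemmas~\ref{lem:system} and~\ref{lem:T}: $\jj(\hh)$ is the unique minimizer of the strongly convex problem \eqref{eq:3}, while the multiplier is uniquely determined as $\lambda=0$ on the elastic branch $\jj=\jj_p$ and $\lambda=|\jj-\jj_p|/\chi$ on the plastic branch $\jj\neq\jj_p$.

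Next I would verify the two hypotheses of Lemma~\ref{lem:implicit}. Semi-smoothness of $T$ is routine: its first block is $C^2$ since $U\in C^2$, and its second block is a composition of $\max(0,\cdot)$ with smooth arguments and a linear term in $\lambda$, so it is semi-smooth as a composition of semi-smooth functions. The decisive point is non-singularity of every matrix in $\partial_{(\jj,\lambda)} T$ at the solution. Writing $g=\nabla_{\jj}U(\jj)-\hh$ and $s=\tfrac{1}{2}(|g|^{2}-\chi^{2})+\lambda$, the second row of a Bouligand derivative takes the form $(\alpha\,\nabla_{\jj\jj}^{2}U(\jj)\,g,\ \alpha-1)$ with $\alpha\in\{0,1\}$ away from the transition set $s=0$ and $\alpha\in[0,1]$ in Clarke's convex hull at $s=0$. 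I would treat the cases $\jj=\jj_p$ (where $\lambda=0$, $|g|\le\chi$) and $\jj\neq\jj_p$ (where $\lambda>0$, $|g|=\chi$ and hence $s>0$ strictly, forcing $\alpha=1$) separately; in each case a Schur complement computation together with the positive definite lower bound $\nabla_{\jj\jj}^{2}U\succeq\sigma I$ coming from $\sigma$-strong convexity rules out vanishing of the determinant, assuming the genuinely nonsmooth regime $\chi>0$.

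Lemma~\ref{lem:implicit} then yields semi-smoothness of $\hh\mapsto\jj(\hh)$ and supplies $-A_{(\jj,\lambda)}^{-1}A_{\hh}\in\partial^{B}(\jj,\lambda)(\hh)$, whose upper $d\times d$ block belongs to $\partial\jj(\hh)$. On the elastic branch the Bouligand representative with $\alpha=0$ forces $A_{\hh}$, and hence this upper block, to vanish, recovering the first line of \eqref{eq:slant:j}. On the plastic branch I would carry out the block inversion via Schur complement and a Sherman--Morrison update, and use the identities $\jj-\jj_p=-\lambda g$ and $\lambda=|\jj-\jj_p|/\chi$ to rewrite $gg^{T}/|g|^{2}$ as $(\jj-\jj_p)\otimes(\jj-\jj_p)/|\jj-\jj_p|^{2}$; a short algebraic simplification then produces the explicit formula in the second line of \eqref{eq:slant:j}, with the matrix under inversion being positive definite again by $\sigma$-strong convexity of $U$. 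The hardest part of the argument will be the bookkeeping on the transition set $|g|=\chi,\ \jj=\jj_p$, where Clarke's generalized Jacobian is a full convex hull rather than a finite set, so one must establish non-singularity uniformly in $\alpha\in[0,1]$ and then justify the selection of the specific representative leading to $\mathbb{S}_\jj=0$.
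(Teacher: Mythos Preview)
Your proposal is correct and follows essentially the same route as the paper: apply the semi-smooth implicit function theorem to $T((\jj,\lambda);\hh)=0$, verify semi-smoothness of $T$ via the $\max$-function, establish non-singularity of $A_{(\jj,\lambda)}$ by a Schur complement argument in the three regimes determined by the sign of $s=x_1+x_2$, and then read off the explicit representatives by choosing $\alpha=0$ on the elastic branch and $\alpha=1$ on the plastic branch. The paper organizes the case split by the sign of $x_1+x_2$ rather than by $\jj=\jj_p$ versus $\jj\neq\jj_p$, but the content is identical; your identification of the transition set $|g|=\chi,\ \jj=\jj_p$ as the delicate case (and the need for $\chi>0$ there so that $g\neq0$) matches the paper's Case~2 exactly.
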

\begin{proof}
From the previous results, we immediately conclude that that for any $\hh \in \RR^d$, the system $T((\jj(\hh),\lambda(\hh);\hh) = 0$ has a unique solution. 
Further recall that $U(\cdot)$ is twice differentiable on its domain, and hence in a neighborhood of $\jj(\hh)$. Since the $\max$-function is semi-smooth~\cite{hintermuller2002}, we conclude that the mapping $T$ is semi-smooth in a neighborhood of the solution $(\jj(\hh),\lambda(\hh);\hh)$. 
Moreover, the partial generalized Jacobians $A_{(\jj,\lambda)} \in \partial_{(\jj,\lambda)} T((\jj,\lambda);\hh)$ and $A_{\hh} \in \partial_{\hh} T((\jj,\lambda);\hh)$ have the form
\begin{align}
    &A_{(\jj,\lambda)} = \left(\begin{matrix} 
        I + \lambda \nabla_{\jj \jj}^2 U & \nabla_{\jj} U - \hh \\ 
         (\nabla_{\jj}U - \hh)^\top (\nabla_{\jj \jj}^2 U) \partial_{1} \ncp & \partial_{2} \ncp
        \end{matrix} \right) \quad
        \text{ and } \notag \\
        &A_{\hh} = \left(\begin{matrix} 
        -\lambda\,I  \\ 
         -(\nabla_{\jj}U - \hh)^\top \partial_{1} \ncp
        \end{matrix} \right).
        \label{eq:mat:A}
\end{align}
Here $U$ is evaluated at $\jj$ and $\ncp$ at $x_1 = \frac{1}{2} |\nabla_{\jj}U(\jj) - \hh|^2 - \chi^2$ and $x_2 = \lambda$. The generalized Jacobian $\partial \ncp = (\partial_1 \ncp,\partial_2 \ncp)$ of the ncp-function is given by the set-valued mapping \cite{Bauschke2011}
\begin{align}
        \partial \ncp (x_1,x_2) = \left\{\begin{array}{ll}
    (0,-1), & \text{ if } x_1 + x_2 < 0,\\
    \{(\alpha,\alpha-1) | \alpha \in [0,1]\}, & \text{ if } x_1 + x_2 = 0,\\
    (1,0), & \text{ if } x_1 + x_2 > 0.
    \end{array}
    \right.
\end{align}
By considering several particular cases and using the equivalent definitions of $(\jj,\lambda)$ provided by Lemma~\ref{lem:system} and \ref{lem:T}, we now show that for any solution of $T((\jj,\lambda);\hh) = 0$, the partial Jacobian $A_{(\jj,\lambda)}$ in \eqref{eq:mat:A} is always non-singular. 
\paragraph{\textbf{Case 1: $x_1 + x_2 < 0$.}}
Since $(\jj,\lambda)$ is a solution of the system \eqref{eq:system:1}--\eqref{eq:system:4} this implies, in particular, that $x_2 = \lambda = 0$. Hence the partial Jacobian has the form
\begin{align*}
        A_{(\jj,\lambda)}^{\jj = \jj_p} = \left(\begin{matrix} 
        I   & \nabla_{\jj} U - \hh \\ 
         0  & -1
        \end{matrix} \right),
\end{align*}
which clearly defines a non-singular matrix.

\paragraph{\textbf{Case 2: $x_1 + x_2 = 0$.}} 
We can immediately deduce that $|\nabla_{\jj}U(\jj) - \hh| =  \chi$ and hence $\lambda = 0$. The partial Jacobians thus have the form 
\begin{align*}
        &A_{(\jj,\lambda)} = \left(\begin{matrix} 
        I  & \nabla_{\jj} U - \hh \\ 
         (\nabla_{\jj}U - \hh)^\top (\nabla_{\jj \jj}^2 U) \alpha & (\alpha-1)
        \end{matrix} \right)
    \end{align*}
    for $\alpha \in [0,1]$. 
    The Schur complement with respect to the second variable is given by 
    \begin{align*}
        \alpha (\nabla_{\jj}U - \hh)^\top (\nabla_{\jj \jj}^2 U) (\nabla_{\jj}U - \hh) + (1-\alpha).
    \end{align*}
    Using that $\alpha \in [0,1]$, $(\nabla_{\jj}U - \hh) \neq 0$, and $\nabla_{\jj \jj}^2 U$ is positive definite, we see that this matrix is positive definite, which implies that the matrix $A_{(\jj,\lambda)}$ is regular.

\paragraph{\textbf{Case 3: $x_1 + x_2 > 0$.}} 
From \eqref{eq:system:1}--\eqref{eq:system:4}, we know that $|\nabla_{\jj}U - \hh| = \chi$ and that $\lambda > 0$. Hence the partial Jacobians have the particular form 
\begin{align}
        &A_{(\jj,\lambda)} = \left(\begin{matrix} 
        I + \lambda \nabla_{\jj \jj}^2 U & \nabla_{\jj} U - \hh \\ 
         (\nabla_{\jj}U - \hh)^\top (\nabla_{\jj \jj}^2 U)  & 0
        \end{matrix} \right)
    \end{align}
    The same arguments like in Case~2 again yield the regularity of this matrix.

\smallskip 

\paragraph{\textbf{Semi-smoothness.}}
We have shown that for any choice $[A_{(\jj,\lambda)},A_\hh] \in \partial T((\jj,\lambda);\hh)$ in the generalized Jacobian, the matrix $A_{(\jj,\lambda)}$ is regular if $T((\jj,\lambda);\hh)=0$; hence Lemma~\ref{lem:implicit} is applicable and provides the semi-smoothness of $\jj(\hh)$ and $\lambda(\hh)$. 

\smallskip 

\paragraph{\textbf{Jacobians.}}
It remains to derive suitable candidates in the generalized Jacobian, for which we again consider two separate cases: 
If $\jj = \jj_p$, 
we know that $\lambda = 0$ and hence can choose the matrices $A_{\jj,\lambda}^{\jj = \jj_p} \in \partial_{(\jj,\lambda)} T$ and $A_{\hh}^{\jj = \jj_p} \in \partial_{\hh} T$ with
\begin{align}
    A_{(\jj,\lambda)}^{\jj = \jj_p} = \left(\begin{matrix} 
    I   & \nabla_{\jj} U - \hh \\ 
     0  & -1
    \end{matrix} \right) 
    \quad \text{and} \quad   
    A_{\hh}^{\jj = \jj_p} = \left(\begin{matrix} 
    0 \\ 
     0
    \end{matrix} \right).
\end{align}
Note that $[A_{\jj,\lambda}^{\jj = \jj_p},A_{\hh}^{\jj = \jj_p}]$ lies in the Bouligand subdifferential $\partial^B T((\jj,\lambda),\hh)$.
This choice immediately leads to $\mathbb{S}_{\jj} = 0$, as announced in the lemma. 
If $\jj \neq \jj_p$,
we know from Lemma~\ref{lem:system} that $\lambda = \frac{|\jj - \jj_p|}{\chi}$ and $\nabla_{\jj} U - \hh = \chi\frac{\jj - \jj_p}{|\jj - \jj_p|}$. We then choose
\begin{align*}
    A_{(\jj,\lambda)}^{\jj \neq \jj_p} = \left(\begin{matrix} 
    I + \frac{|\jj - \jj_p|}{\chi} (\nabla_{\jj \jj}^2 U) & -\chi\frac{\jj - \jj_p}{|\jj - \jj_p|} \\ 
    -\chi\frac{(\jj - \jj_p)^\top}{|\jj - \jj_p|}(\nabla_{\jj \jj}^2 U)  & 0
    \end{matrix} \right) 
    \quad \text{and} \quad   
    A_{\hh}^{\jj \neq \jj_p} = \left(\begin{matrix} 
    -\frac{|\jj - \jj_p|}{\chi} I \\ 
    \chi\frac{(\jj - \jj_p)^\top}{|\jj - \jj_p|}
    \end{matrix} \right),
\end{align*}
This choice again yields an element of the Bouligand subdifferential.
By solving $A_{(\jj,\lambda)}^{\jj \neq \jj_p} \binom{\mathbb{S}_{\jj}}{\mathbb{S}_{\lambda}} = -A_{\hh}^{\jj \neq \jj_p}$, we obtain the formula for $\mathbb{S}_{\jj}$ for this case.
\end{proof}

\subsection{Proof of Theorem~\ref{thm:semismooth:b}}
Noting that $\bb(\hh) = \mu_0 \hh + \jj(\hh)$, the result follows immediately from the additivity theorem for semi-smooth functions and Lemma~\ref{lem:J}. 

\section{A semi-smooth Newton method}
\label{sec:newton}

For the iterative solution of the nonlinear system \eqref{eq:scal_var_h}, we apply a semi-smooth Newton method with line search~\cite{Qi1993,Ulbrich2011}. 
Starting from $\psi_h^0 \in V_h$, we update 
\begin{align}
    \label{eq:update:ssn}
    \phi^{n+1}_h = \phi^n_h + \tau^n \delta \phi^n_h \qquad n \geq 0
\end{align}
with the search direction $\delta \phi^n_h \in V_h$ defined as the solution of the linear system
\begin{align}
    \label{eq:linear:system:ssn}
    (\mathbb{S}_{\bb}^n\,\nabla  \delta \phi^n_h, \nabla v_h)_h = (\nabla_{\hh} w^*(\hs - \nabla \psi_h) , \nabla v_h)_{h} \quad \forall v_h \in V_h.
\end{align}
On every element $T \in \Th$, the matrix $\mathbb{S}_{\bb}^n|_T$ is required which is chosen as the element in the generalized Jacobian $\partial_{\hh}\bb(\hs(x_T) - \nabla \psi^n_h(x_T))$ as described in \eqref{eq:slant:b}.
For determining the step size $\tau^n$, we use 
\emph{Armijo-backtracking}~\cite{Nocedal2006}, i.e.,   
we choose
\begin{align} 
\label{eq:armijo}
\tau^n = \max\{&\tau= \rho^m: 
\Phi(\tau) \le \Phi(0) + \sigma \tau \Phi'(0), \ m \ge 0\} \end{align}
with $\Phi(\tau)=(w^*(\hh_s - \nabla (\psi_h^n + \tau \delta \psi_h^n),1)_h$ and given $0<\sigma< 1/2$ and $\rho<1$. 

\begin{remark}
If the material law $\bb(\hh) = \nabla_\hh w^*(\hh)$ is sufficiently smooth, then the matrix $\mathbb{S}_{\bb}^n = \nabla_{\hh} \bb(\hh^n) = \nabla_{\hh\hh}^2 w^*(\hh^n) $ corresponds to the Hessian of the co-energy functional, and the method \eqref{eq:update:ssn}--\eqref{eq:linear:system:ssn} reduces to the damped Newton method considered in \cite{Engertsberger23}. Alternative choices for $\mathbb{S}_{\bb}^n$ that also work in the non-smooth setting have been considered in \cite{Egger2024quasi,Dlala2008b,Hantila2000}; see Section~\ref{sec:numerics} for more details.
\end{remark}

Based on our previous considerations, we obtain the following convergence result.
\begin{theorem} \label{thm:convergence}
Let Assumption \ref{ass:3} hold and $\psi_h \in V_h$ be the unique solution of \eqref{eq:scal_var_h}. 
Then for any $\psi_h \in V_h$, the iterates $\psi_h^n$ defined by \eqref{eq:update:ssn}--\eqref{eq:armijo} satisfy 
\begin{align*}
    \|\psi_h^n - \psi_h\|_h \leq C \, q^n \|\psi_h^0 - \psi_h\|_h, \qquad n \geq 1
\end{align*}
with constants $C \ge 0$ and $q < 1$ independent of $\Th$; hence the iterative method exhibits global $r$-linear and mesh-independent convergence. 
For $\psi_h^n$ sufficiently close to $\psi_h$, Armijo backtracking \eqref{eq:armijo} returns $\tau^n=1$ and 
$\|\psi_h^{n+1} - \psi_h\|_h = o(\|\psi_h^{n} - \psi_h\|_h)$ with $n \to \infty$; i.e., the method converges locally superlinearly.
\end{theorem}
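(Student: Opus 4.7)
The plan is to treat the iteration \eqref{eq:update:ssn}--\eqref{eq:armijo} as a damped descent method for the discrete co-energy functional $J_h(v_h) = (w^*(\hs - \nabla v_h),1)_h$, $v_h \in V_h$, and to combine the standard line-search convergence theory for strongly convex objectives with the semi-smoothness established in Theorem~\ref{thm:semismooth:b}. First I would observe that by the chain rule $J_h'(\psi_h^n)(v_h) = -(\bb(\hs - \nabla \psi_h^n), \nabla v_h)_h$, so the Newton system \eqref{eq:linear:system:ssn} is equivalent to $(\mathbb{S}_{\bb}^n \nabla \delta \psi_h^n, \nabla v_h)_h = -J_h'(\psi_h^n)(v_h)$. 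From the explicit formula \eqref{eq:slant:b}, together with the $\sigma$-strong convexity of $U$, one reads off element-wise spectral bounds $\mu_0 I \le \mathbb{S}_{\bb}^n \le L\,I$ with $L = (1+\sigma\mu_0)/\sigma$ as in Theorem~\ref{thm:w}. In particular $\delta \psi_h^n$ is a descent direction satisfying both an angle and a length condition with constants depending only on $\mu_0$ and $L$.

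For the global, mesh-independent r-linear rate I would apply Theorem~\ref{thm:w} pointwise at each barycenter $x_T$ to deduce that $J_h$ is $\mu_0$-strongly convex with $L$-Lipschitz gradient in the seminorm $\|\nabla\cdot\|_h$, uniformly in $h$. Standard Armijo analysis (see e.g.~\cite{Nocedal2006}) then yields a lower bound $\tau^n \ge \tau_{\min} > 0$ depending only on $\mu_0$, $L$, $\sigma$, $\rho$, and the usual descent estimate
\begin{align*}
J_h(\psi_h^{n+1}) - J_h(\psi_h) \le q_0 \bigl( J_h(\psi_h^n) - J_h(\psi_h) \bigr)
\end{align*}
with $q_0 < 1$ independent of $h$. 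Sandwiching the energy difference between $\tfrac{\mu_0}{2}\|\nabla(\psi_h^n - \psi_h)\|_h^2$ and $\tfrac{L}{2}\|\nabla(\psi_h^n - \psi_h)\|_h^2$ yields r-linear convergence in the $h$-seminorm with rate $q = \sqrt{q_0}$, and a discrete Poincaré--Friedrichs inequality on $V_h \subset H^1_*(\Omega)$ then transfers the estimate to $\|\psi_h^n - \psi_h\|_h$.

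For the local superlinear statement the crucial ingredient is the semi-smoothness from Theorem~\ref{thm:semismooth:b}. Since gradients of $V_h$-functions are constant on each simplex and the quadrature \eqref{eq:int_h} samples only at the element barycenters, summing the pointwise little-o estimate element by element over the fixed mesh $\Th$ gives
\begin{align*}
\|\bb(\hs - \nabla \psi_h^n) - \bb(\hs - \nabla \psi_h) - \mathbb{S}_{\bb}^n \nabla(\psi_h - \psi_h^n)\|_h = o(\|\nabla(\psi_h^n - \psi_h)\|_h).
\end{align*}
Testing the Newton equation against $v_h = \psi_h^n + \delta\psi_h^n - \psi_h$, subtracting the discrete Euler equation $(\bb(\hs - \nabla\psi_h),\nabla v_h)_h = 0$, and using $\mathbb{S}_{\bb}^n \ge \mu_0 I$ yields the undamped superlinear estimate $\|\nabla(\psi_h^n + \delta \psi_h^n - \psi_h)\|_h = o(\|\nabla(\psi_h^n - \psi_h)\|_h)$. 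A classical Taylor-type argument, valid in the semi-smooth setting because $\sigma < 1/2$ (see e.g.~\cite{Ulbrich2011}), then shows that Armijo backtracking eventually accepts the full step $\tau^n = 1$, and Poincaré--Friedrichs again transfers the rate to $\|\cdot\|_h$.

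The step I expect to require most care is the passage from the pointwise little-o property of Theorem~\ref{thm:semismooth:b} to the discrete estimate displayed above. The modulus of approximation depends on the element, and in particular on whether $\jj = \jj_p$ or $\jj \neq \jj_p$ there, so a single global little-o term is only obtained because $\Th$ contains finitely many simplices and the iterates converge to $\psi_h$ in the $h$-seminorm (which controls each element-wise gradient by an inverse-style bound). This is precisely the reason the superlinear statement cannot inherit the mesh-independence established for the linear rate, and why the two parts of the theorem must be proved by separate arguments.
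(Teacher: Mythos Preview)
Your proposal is correct and follows essentially the same route as the paper: both arguments hinge on the uniform spectral bounds $\mu_0 I \le \mathbb{S}_{\bb}^n \le (\mu_0+1/\sigma) I$ read off from \eqref{eq:slant:b}, use these bounds together with standard Armijo/descent theory for global mesh-independent linear convergence, and then invoke the semi-smoothness of Theorem~\ref{thm:semismooth:b} for the local superlinear rate and acceptance of the full step. The only notable difference is presentational: the paper delegates each of the three steps to external references (\cite{Egger2024quasi}, \cite{Facchinei1995}, \cite{Ulbrich2011}), whereas you spell out the underlying mechanism, and you add the useful observation---not made explicit in the paper---that the element-wise aggregation of the little-$o$ terms is what makes the superlinear rate mesh-dependent.
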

\begin{proof}
From \eqref{eq:slant:b}, we see that either $\mathbb{S}_{\bb}^n = \mu_0 I$ or $\mathbb{S}_{\bb}^n = \mu_0 I + (\mathbb{P}^n)^{-1}$ with a symmetric and strictly positive definite matrix $\mathbb{P}^n$. 
Its inverse is at least positive semi-definite and uniformly bounded, which implies 
\begin{align}
    \label{eq:bounds:ssn}
    \mu_0 |\xi|^2 \leq \xi^\top \mathbb{S}_{\bb}^n \xi \leq (\mu_0 + 1/\sigma) |\xi|^2 \qquad n \geq 0.
\end{align}
From \cite[Thm. 2]{Egger2024quasi}, we can thus conclude global linear and mesh-independent convergence of the iteration. 
To prove that $\tau^n=1$ for sufficiently large $n$, we apply \cite[Thm.~3.3]{Facchinei1995}, which requires to verify the inequality
\begin{align*}
    -(\nabla_\hh w^*(\hs - \nabla \phi^n_h) , \nabla \delta \psi_h^n)_{h} \leq -p \, \|\delta \phi^n_h\|_h^2
\end{align*}
for all $n \geq n_0$ with a constant $p > 0$. 
This assertion follows from the definition of the semi-smooth Newton step in \eqref{eq:linear:system:ssn} and the bounds in \eqref{eq:bounds:ssn}. The superlinear convergence can then be deduced from \cite[Prop.~2.12]{Ulbrich2011}.
\end{proof}

\section{Generalizations}
\label{sec:extension}

Before turning to numerical tests, let us mention two generalizations of the proposed results that follow with similar arguments as the previous sections.

\subsection{Multiple pinning forces}
To archive better accuracy, the magnetic polarization can be split into partial magnetic polarizations $\jj = \sum\nolimits \jj_k$; see \cite{Lavet2013}. 
Each $\jj_k$ is then defined separately by a minimization problem of the form 
\begin{align}
    \jj_k = \argmin\nolimits_{\jj_k} \{ U_k(\jj_k) - \langle \hh, \jj_k \rangle + \chi_k |\jj_k - \jj_{k,p}| \}
\end{align}
with the partial magnetic polarization $\jj_{k,p}$ of the previous time-step, internal energy densities $U_k(\cdot)$, and pinning forces $\chi_k$, $k=1,\ldots,K$. The problem data are assumed to uniformly satisfy Assumption \ref{ass:3}. 
The corresponding hysteresis operator then has the form $\bb(\hh) = \mu_0 \hh + \sum\nolimits_k \jj_k$. It is not difficult to see that this material law can again be restated as $\bb(\hh) = \nabla_{\hh} w^*(\hh)$ with magnetic co-energy density \cite{egger2024inverse}
\begin{align} \label{eq:wsk}
w^*(\hh) = \frac{\mu_0}{2} |\hh|^2 - \sum\nolimits_{k} \Big( \min_{\jj_k} \{ U_k(\jj_k) - \langle \jj_k, \hh \rangle + \chi_k |\jj_k - \jj_{k,p}| \} \Big).
\end{align}
The properties of the co-energy density $w^*(\hh)$ and the hysteresis operator $\bb(\hh)$ stated in the previous sections carry over almost verbatim to this setting. 
In particular, an element in the generalized Jacobian $\mathbb{S}_{\bb} \in \partial_{\hh} \bb(\hh)$ is given by
\begin{align}
    \mathbb{S}_{\bb} = \mu_0 I + \sum\nolimits_k \mathbb{S}_{\jj_k},
\end{align}
where the generalized Jacobians $\mathbb{S}_{\jj_k}$ are defined in the same way as in Lemma~\ref{lem:J}. 
Using $\bb(\hh) = \mu_0 \hh + \sum\nolimits_k \jj_k$, the convergence statements about the semi-smooth Newton method then carry over immediately to the case of multiple pinning forces.

\subsection{Inverse hysteresis operator}
By convex duality, see e.g. \cite[Ch.~5]{Beck2017}, the inverse of the strongly monotone and Lipschitz continuous function $\bb(\hh) = \nabla_\hh w^*(\hh)$ can be represented in the form $\hh(\bb) = \nabla_\bb w(\bb)$, where 
\begin{align}
w(\bb) = \sup\nolimits_\hh \{\langle \hh,\bb\rangle - w^*(\hh) \}
\end{align}
is the convex conjugate function, i.e., the \emph{magnetic energy density}. 
In the context of magnetic hysteresis, $\hh(\bb)$ is called the \emph{inverse hysteresis operator}.
By the conjugate correspondence principle \cite[Thm.~5.26]{Beck2017}, Lipschitz continuity and  strong convexity of $\bb(\hh)$ translate to strong convexity and Lipschitz continuity of $\hh(\bb)$. 
The inverse function theorem for semi-smooth functions~\cite{Gowda2004,Pang2003} further implies that semi-smoothness of $\bb(\hh)$ translates to that of $\hh(\bb)$, and that a representative in generalized Jacobian $\partial_{\bb} \hh(\bb)$ of the inverse hysteresis operator is given by $\mathbb{S}_{\bb} = (\mathbb{S}_{\bb})^{-1}$ with 
$\mathbb{S}_{\bb}$ as defined in \eqref{eq:slant:b}. 
The results derived in this paper can thus be transferred to the vector potential formulation of \eqref{eq:1}--\eqref{eq:3}, which reads 
\begin{alignat}{2}
\curl(\nabla_\bb w(\curl \aa)) &= \js \qquad &&\text{in } \Omega, \\
\nn \times \aa &= 0 \qquad && \text{on } \partial\Omega,
\end{alignat}
and corresponding discretizations thereof; see \cite{Egger2024ho,heise94} for the anhysteretic case. Numerical results involving the inverse hysteresis operator can be found in \cite{egger2024inverse}.

\section{Numerical tests}
\label{sec:numerics}

To illustrate the theoretical results derived in this paper and to demonstrate the efficiency of the proposed semi-smooth Newton method, we simulate the T-joint of a three-limb transformer. the setup of our test problem is taken from~\cite{Gyselinck2004}. 

\subsection{Model problem}
The specific setup of the problem allows to perform the computations on a two-dimensional cross-section $\Omega \subset \RR^2$ which is depicted in Figure \ref{fig:geometry:tjoint}. 
\begin{figure}[!ht]
    \centering
    \includegraphics[trim = {1cm 1cm 0cm 0.5cm},clip,width=0.6\linewidth]{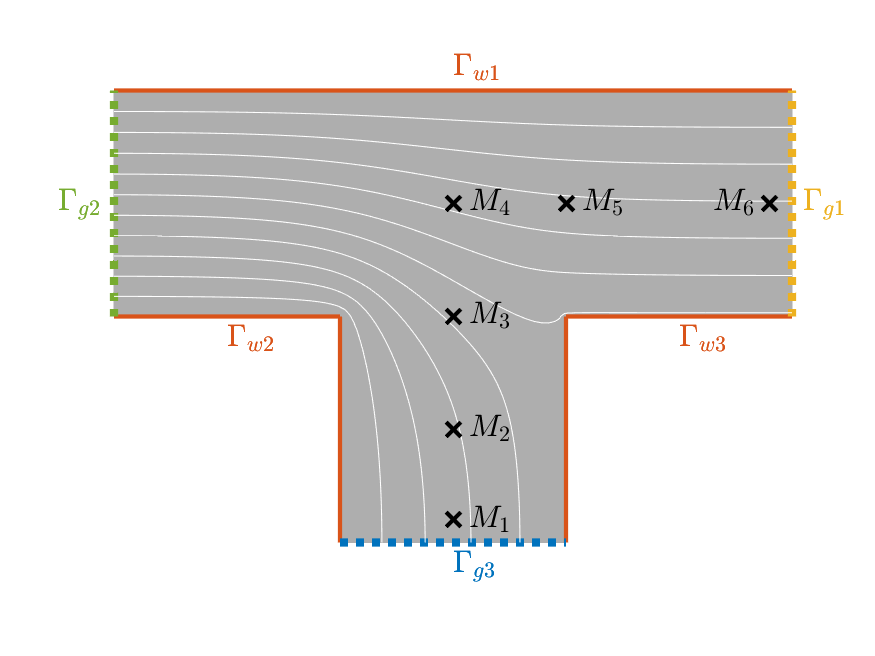}
    \caption{2D cross-section of the T-joint of a transformer (limb width: $1$m) taken from \cite{Gyselinck2004,Jacques2015}. On the flux walls $\Gamma_w$ (red) the boundary condition $\bb \cdot \mathbf{n} = 0$ is imposed, while on the flux gates $\Gamma_g$ (green, yellow, blue) we prescribe $\hh \times \mathbf{n} = 0$. In addition, a time-varying flux $\phi$ is imposed at these gates. 
    Corresponding $\bb-\hh$ values at different time-steps are extracted at the measurement points $M_1$--$M_6$.}
    \label{fig:geometry:tjoint}
\end{figure}
On the flux walls $\Gamma_w$ the boundary condition $\bb \cdot \mathbf{n} = 0$ is imposed, while on the flux gates $\Gamma_g$, we require $\hh \times \mathbf{n} = 0$. 
In addition, the total flux $\Phi_i = \int_{\Gamma_{gi}} \bb \cdot \nn \, ds$ is imposed. The condition
$\Phi_3 = -(\Phi_1 + \Phi_2)$ is required to stay compatible with the magnetic Gau\ss' law. 
\begin{table}[h!]
\centering
\small
\begin{tabular}{@{}cll@{}}
\toprule 
\textbf{parameter} & \textbf{values} & \textbf{description}\\
\midrule
$A_s$ & 65 & field strength parameter \\
$J_{s,k}$  & 0.11, 0.3, 0.44, 0.33, 0.04 & partial reference polarization\\
$\chi_k$  & 0, 10, 20, 40, 60 & pinning force associated with $J_{s,k}$\\
\bottomrule
\end{tabular}
\caption{Material data for 5-cell model from \cite{Lavet2013}.}
\label{tab:material:lavet}
\end{table}
A magnetic hysteresis model with $5$ pinning forces and  internal energy densities $U_k(\jj_k) = - \frac{A J_{s,k}}{\pi} \log \left(\cos \left(\frac{\pi}{2} \frac{|\jj|}{J_{s,k}}\right)\right)$ 
is used in our simulations. The corresponding parameters are summarized in Table~\ref{tab:material:lavet}. 
Let us note that the internal energy densities $U_k$ satisfy all the conditions required in our assumptions.

\subsection{Discretized scalar potential formulation}

The following variational problem, describing the finite element approximation for the weak formulation of the problem under consideration, can be derived using standard arguments.
\begin{problem} \label{prob:test}
Find $\psi_h \in V_h$ such that 
    \begin{align}
        \label{eq:numerics:scal}
        (\nabla_\hh w^*(-\nabla \psi_h), \nabla v_h)_h = (\Phi,w_h)_{\Gamma_g} \qquad \forall v_h \in V_h
    \end{align}
with $V_h:= \{v_h \in P_1(\Th) \cap H^1(\Omega): v_h|_{\Gamma_i}=c_i, \, c_1=0, \ c_2,c_3 \in \RR \}$.
\end{problem}
Here $P_1(\Th) \cap H^1(\Omega)$ is the space of continuous piecewise linear finite element functions on a triangular mesh $\Th$ of $\Omega$, and $(a,b)_\Gamma = \int_\Gamma a b$ abbreviates the integral over the boundary. 
All results of this work are also valid for this particular problem.

\subsection{Iterative solvers}
In our numerical tests, we compare different iterative methods for the solution of Problem~\ref{prob:test}, which all have the common form \eqref{eq:update:ssn}--\eqref{eq:armijo}. The only difference consists in the definition of the matrix $\mathbb{S}_{\bb}^n$ for determining the update direction $\delta \psi_h^n$. 
The following choices are considered:
\begin{itemize}
\item the semi-smooth Newton method (SSN) presented in Section~\ref{sec:newton};
\item the local Quasi-Newton Method (LQN) proposed in \cite{Egger2024quasi} with BFGS updates;
\item the local coefficient method (LCM) described in \cite{Dlala2008b}; 
\item the global coefficient method (GCM) presented in \cite{Hantila2000}.
\end{itemize}
While the first two methods update the matrices $\mathbb{S}_{\bb}^n$ in every iteration step, the other methods work with fixed matrices $\mathbb{S}_{\bb}$ for all iterations of a single load step. For (LCM) the approximation is formally defined by 
\begin{align*}
\mathbb{S}_{\bb} = \frac{1}{2} \Big( \frac{d \bb_x^0}{d \hh_x^0} + \frac{d \bb_y^0}{d \hh_y^0} \Big)
\end{align*}
which is evaluated at every quadrature point $x_T$. 
The derivatives are approximated by difference quotients in the first iteration step, always checking if the values are above/below the parameters $\mu_0$ and $\nu_0$ in free space respectively. 
For the (CGM), one simply chooses $\mathbb{S}_{\bb}= \mu_0 \mu_r I$ with the relative permeability $\mu_r = 1000$ determined by parameter tuning. Here the matrix stays the same even over a complete load cycle. 

\subsection{Implementation details}
All methods are considered as special cases of the iteration \eqref{eq:update:ssn}--\eqref{eq:armijo} with the only difference being the choice of the matrix $\mathbb{S}_{\bb}^n$ in every iterations step. 
The solution of the linear systems \eqref{eq:linear:system:ssn} is done by sparse direct solvers. For the (LCM) and (GCM) methods, factorizations are only performed once per load step.
For the Armijo linesearch \eqref{eq:armijo} we use $\rho = 0.5$, $\sigma = 0.1$. The methods are terminated when $|W_h^*(\psi_h^{n+1}) - W_h^*(\psi_h^{n})| \leq 10^{-8} \, W_h^*(\psi_h^{0})$ for the first time; here $W_h^*(\psi_h)=(w^*(\hh_s - \nabla \psi_h,1)_h$ denotes the total co-energy of the system computed on the current grid.
All computations were carried out in \textsc{Matlab} on a laptop equipped with a 13th Gen Intel Core i5-1335U CPU with clock rate 1.30 GHz.

\subsection{Simulation of a single load step}
In a first experiment, we illustrate the global convergence of the proposed method by the simulation of only one large load step. Here the previous partial polarizations are set to $\jj_{k,p} = 0$ and the input flux to $\Phi_1 = -0.5$ resp. $\Phi_2 = 1$. The corresponding results are depicted in Table \ref{tab:exp1:scal}.

\begin{table}[ht!]
\centering \small
\begin{tabular}{@{}r rr rr rr rr@{}}
\toprule
\textbf{dof} 
& \multicolumn{2}{c}{\textbf{SSN}} 
& \multicolumn{2}{c}{\textbf{LQN}} 
& \multicolumn{2}{c}{\textbf{LCM}} 
& \multicolumn{2}{c}{\textbf{GCM}} \\
\midrule
570    & \hspace*{1em} 6 & 0.062s   & \hspace*{1em} 14 & 0.147s  & \hspace*{1em} 28 & 0.240s   & \hspace*{1em} 43 & 0.440s \\
2,170  & 6 & 0.146s   & 15 & 0.527s  & 39 & 1.025s   & 70 & 2.310s \\
8,563  & 6 & 0.576s   & 15 & 2.180s  & 63 & 6.680s   & 120 & 16.10s \\
33,421 & 6 & 2.710s   & 21 & 14.40s  & 97 & 47.10s   & 204 & 130.00s \\
\bottomrule
\end{tabular}
\caption{Iteration numbers and computation times for the four different methods across mesh refinement levels for simulation of a single load step.}
\label{tab:exp1:scal}
\end{table}

As predicted by our theoretical results, the semi-smooth Newton method (SSN) converges globally with mesh independent iteration numbers. The local quasi Newton method (LQN) requires around 3 times as many iterations and computation time. The local and global coefficient method (LCM) and (GCM) seem to suffer from the bad initialization and hence require significantly more iterations and time. We also observe a mild increase in the iterations numbers with the system dimension, which we interpret as an improved performance on coarse meshes, i.e.,  in the pre-asymptotic regime. In principle, also for these methods the global linear mesh-independent convergence is guaranteed by \cite[Thm.~2]{Egger2024quasi}.

\subsection{Simulation of a load cycle}
In the second experiment, we consider simulation of a typical load cycle. We start in the completely demagnetized state with partial magnetic polarization $\jj_{k,p} = 0$ and impose two periods of the sinusoidal flux input $\Phi_1(t) = \cos(2\pi t + 2\pi/3)$ resp. $\Phi_2(t) = \cos(2\pi t)$, which in the first quarter of the initial period are multiplied by the factor $(1 - \cos(4 \pi t))/2$; see \cite{Gyselinck2004} for a similar setup. 
Let us note that due to the rate-independence of the model the scaling of time is not of relevance.
The time interval $[0,2]$ is decomposed into 100 equidistant time steps and in every time step $t^n$ the magnetic field problem \eqref{eq:numerics:scal} is solved with $\jj_{p,k} = \jj_k(t^{n-1})$, where $\jj_k(t^{n-1})$ is the partial magnetic polarization of the pervious time step.
Our observations for these simulations are summarized in Table~\ref{tab:average:scal}.

\begin{table}[ht!]
\centering \small
\begin{tabular}{@{}r rr rr rr rr@{}}
\toprule
\textbf{dof} 
& \multicolumn{2}{c}{\textbf{SSN}} 
& \multicolumn{2}{c}{\textbf{LQN}} 
& \multicolumn{2}{c}{\textbf{LCM}} 
& \multicolumn{2}{c}{\textbf{GCM}} \\
\midrule
570 & \hspace*{1em} 4.30 & 3.51s & \hspace*{1em} 5.60 & 4.03s  & \hspace*{1em} 16.6 & 15.1s & \hspace*{1em}29.3 & 24.7s   \\
2170 &  4.35 & 11.3s &  5.69 & 13.7s  &  17.6 & 54.9s & 34.2 & 93.2s  \\
8563 &  4.42 & 50.7s &  5.79 & 60.8s  & 19.2 & 249.8s & 44.5 & 535.2s    \\
33421 &  4.52 & 244.2s &  6.01 & 301.1s  & 26.4 & 1608.5s & 57.1 & 3115.5s    \\
\bottomrule
\end{tabular}
\caption{Average iteration numbers per load step and total computation times for different methods and refinement levels for a load cycle with 100 time steps}
\label{tab:average:scal}
\end{table}

Since the iterations in every time step after the first are started with a good initial guess, we expect a general improvement of average iteration numbers. 
Again the semi-smooth Newton method (SSN) performs best with the lowest number of iteration and computation time. 
The local quasi Newton method (LQN) requires about 1.3-1.4 times more iterations and computation here. 
The local coefficient method (LCM) needs about 4-6 times as many iterations and while the numbers for the global coefficient method (GCM) are increased by another factor of about 2. 
Iteration numbers of 30-100 were also observed for modifications of the LCM and GCM method~\cite{Yue2025,Zhou2017}, so the two Newton-type methods (SSN) and (LQN) seem to constantly provide a superior performance. 
%


For illustration of the material behavior, we display in Figure~\ref{fig:hysteresis:HB} 
the hysteresis curves for the time-interval $[1,2]$ at the reference points $M_i$, $i=1,\ldots,6$ computed by the damped semi-smooth Newton method on a mesh with $33\;421$ vertices.
\begin{figure}[ht!]
    \centering
    \includegraphics[trim = {0.5cm 0cm 1cm 0.5cm},clip,width=0.45\linewidth]{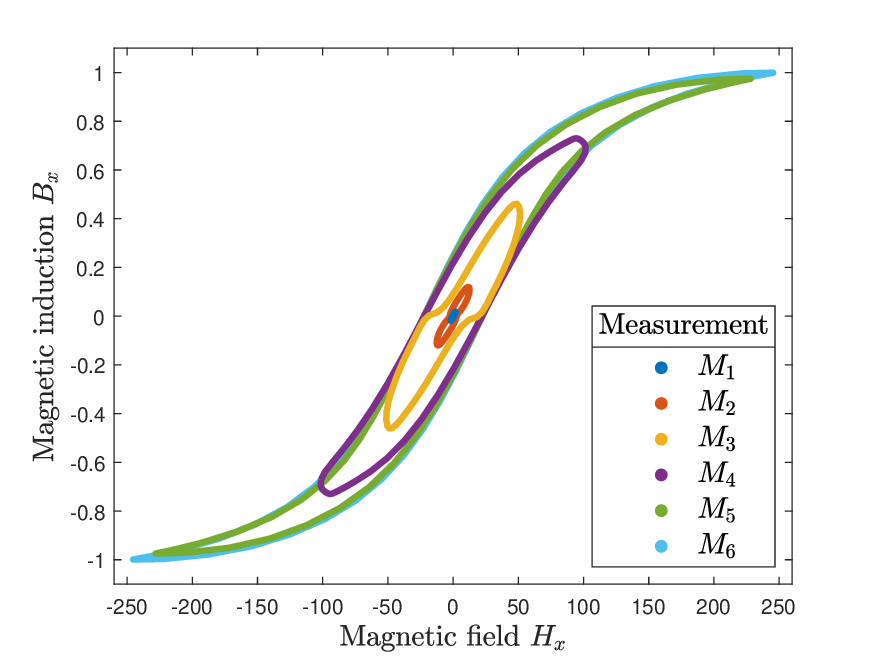}
    \hfill
    \includegraphics[trim = {0.5cm 0cm 1cm 0.5cm},clip,width=0.45\linewidth]{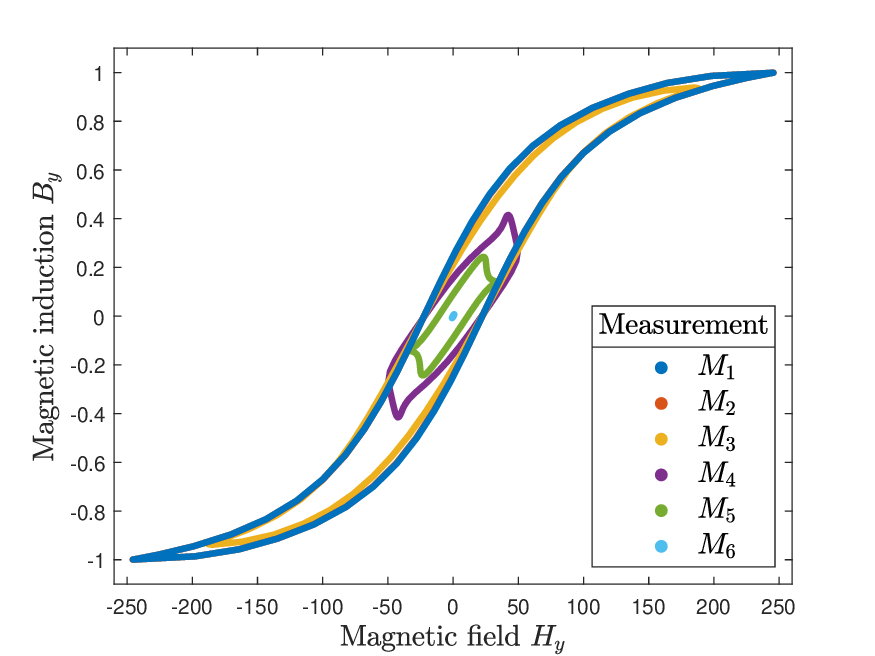}
    \caption{Hysteresis curves of the $x$- and $y$-component of the magnetic field $\hh$ and magnetic induction $\bb$ at the measurement points $M_i$ depicted in Figure~\ref{fig:geometry:tjoint}.}
    \label{fig:hysteresis:HB}
\end{figure}

\section{Discussion}
In this work we studied the constitutive relation $\bb=\bb(\hh;\jj_p)$ resulting from the energy-based hysteresis model of \cite{Lavet2013}. We established strong monotonicity and Lipschitz continuity, which allowed us to prove well-posedness of the scalar potential formulation of corresponding nonlinear magnetic field problems. 
In addition, we proved semi-smoothness of the hysteresis operator and investigated the local and global convergence properties of a corresponding semi-smooth Newton method. 
The feasibility and performance of the method was demonstrated by numerical tests for a typical test problem. 
As indicated in Section~\ref{sec:extension}, a similar analysis could be performed for the inverse hysteresis operator $\hh=\hh(\bb;\jj_p)$ and the vector potential formulation, which would also allow to include the effect of eddy currents very naturally. Iterative solvers for related problems have been studied, e.g., in \cite{Saitz1999,Yuan2005}. Detailed investigations in these directions are left for future research.

\bigskip

\begingroup
\small 
\subsection*{Acknowledgements}
This work was supported by the joint DFG/FWF Collaborative Research Centre CREATOR (DFG: Project-ID 492661287/TRR 361; FWF: 10.55776/F90).
\endgroup

\bigskip 


\end{document}